\documentclass[11pt]{article}
\usepackage{epsfig,amsmath,amsthm,latexsym,graphicx,amsfonts,amssymb}
\addtolength{\textwidth}{30mm} \addtolength{\oddsidemargin}{-15mm}
\addtolength{\textheight}{40mm} \addtolength{\topmargin}{-15mm}
\setlength{\parskip}{5mm} \setlength{\parindent}{0mm} \sloppy
\newcommand{\beq}{\begin{equation}}
\newcommand{\eeq}{\end{equation}}
\newcommand{\R}{\mathbb R}

\newcommand{\E}{\mathbb E}
\newcommand{\Sr}{{\mathbb S}_r^{q-1}}
\newcommand{\ex}{\mu}
\newcommand{\hf}{\frac{1}{2}}
\newcommand{\fdelta}{\delta}
\newcommand{\feps}{\epsilon}

\newcommand{\sign}{\operatorname{\mathrm{sign}}}
\newcommand{\signmap}{\operatorname{\mathrm{sign}_X}}

\newcommand{\signmapZ}{\operatorname{\mathrm{sign}_Z}}
\newcommand{\signmapinv}{\operatorname{\mathrm{sign}_X^{-1}}}
\newcommand{\signmapeps}{\operatorname{\mathrm{sign}_{X\fdelta}}}

\newcommand{\signmapepsinv}{\operatorname{\mathrm{sign}_{X\fdelta}^{-1}}}
\newcommand{\arctanh}{\operatorname{\mathrm{arctanh}}}
\newcommand{\tr}{\operatorname{\mathrm{tr}}}
\newcommand{\vol}{\operatorname{\mathrm{Vol}}}
\newcommand{\volqeucl}{\operatorname{\mathrm{Vol}_q}}
\newcommand{\volqeuclsq}{\operatorname{\mathrm{Vol}_q^2}}
\newcommand{\col}{\operatorname{\mathrm{col}}}
\newcommand{\comp}{\operatorname{\mathrm{Comp}}}
\newcommand{\cy}{\mathcal{Y}}
\newcommand{\SX}{\mathcal{S}_X}
\newcommand{\SbarX}{\mathcal{S}_{\overline{X}}}
\newcommand{\SY}{\mathcal{S}_{Y}}
\newcommand{\SZ}{\mathcal{S}_{Z}}
\newcommand{\SZI}{\mathcal{S}_{Z_I}}
\newcommand{\SXI}{\mathcal{S}_{X_I}}
\newcommand{\defeq}{\stackrel{\mbox{\tiny{def}}}{=}}
\newcommand{\dmin}{\operatorname{\Delta_{\mathrm{min}}}(X)}
\newcommand{\dmax}{\operatorname{\Delta_{\mathrm{max}}}(X)}
\newtheorem{theorem}{Theorem}
\newtheorem{corol}[theorem]{Corollary}
\newtheorem{lemma}[theorem]{Lemma}

\newtheorem{defn}{Definition}

\begin{document}

\title{Volumes of logistic regression models \\ with applications to model selection}
\author{James G. Dowty}
\date{\today}

\maketitle

\abstract{Logistic regression models with $n$ observations and $q$ linearly-independent covariates are shown to have Fisher information volumes which are bounded below by $\pi^q$ and above by ${n \choose q} \pi^q$.  This is proved with a novel generalization of the classical theorems of Pythagoras and de Gua, which is of independent interest.  The finding that the volume is always finite is new, and it implies that the volume can be directly interpreted as a measure of model complexity.  The volume is shown to be a continuous function of the design matrix $X$ at generic $X$, but to be discontinuous in general.  This means that models with sparse design matrices can be significantly less complex than nearby models, so the resulting model-selection criterion prefers sparse models.  This is analogous to the way that $\ell^1$-regularisation tends to prefer sparse model fits, though in our case this behaviour arises spontaneously from general principles.  Lastly, an unusual topological duality is shown to exist between the ideal boundaries of the natural and expectation parameter spaces of logistic regression models.
}

\section{Overview and context of results}
\label{S:overview}

Any full-rank, $q \times n$ matrix $X$ with $q \le n$ is the design matrix of a unique logistic regression model $\SX$ for binary data $y \in \{ 0,1 \}^n$ \cite{McCullaghNelder83}.  Here, the $n$ components of $y$ are considered to be draws from $n$ independent Bernoulli random variables and we are using the canonical link function.

When equipped with the Fisher information metric, the $q$-dimensional parameter space of $\SX$ becomes a Riemannian manifold \cite{KassVos97}. Further, by Chentsov's theorem \cite{Chentsov78,AyEtAl12}, the Fisher information metric is the only natural metric on $\SX$, in the sense that it is the only metric which is invariant under natural statistical transformations related to sufficient statistics.  The geometry of $\SX$ is therefore likely to be important and useful in understanding the behaviour of $\SX$.

In this paper, we concentrate on the simplest geometric invariant of $\SX$, namely its volume $\vol(\SX)$.  We show that $\vol(\SX)$ is always finite, which was previously unknown, and we prove the following bounds.

\begin{theorem} \label{T:volSXbound}
$$ \pi^q \le \vol(\SX) \le {n \choose q} \pi^q.$$
\end{theorem}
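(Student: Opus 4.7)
The plan is to work in the variance-stabilising coordinates $\phi_i = 2\arcsin\sqrt{p_i}$, where $p_i = \sigma(x_i^T\beta)$ and $x_i$ is the $i$-th column of $X$. A direct computation shows that for a single Bernoulli the Fisher metric in this coordinate is simply $d\phi^2$, so the joint Fisher metric on $n$ independent Bernoullis is the standard Euclidean metric on the open box $(0,\pi)^n$. The model $\SX$ is then realised as the image of the smooth map
\[
F:\R^q \to (0,\pi)^n, \qquad F(\beta) = (\phi_1(\beta),\ldots,\phi_n(\beta)),
\]
and $\vol(\SX)$ equals the induced Euclidean $q$-volume of this submanifold of the $n$-cube.

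Next I would expand the volume form via a Pythagoras--de Gua identity. Writing $g(t) = 2\arcsin\sqrt{\sigma(t)}$ and $J = DF$, the chain rule gives $J = \operatorname{diag}(g'(x_i^T\beta))\,X^T$, so Cauchy--Binet yields
\[
\sqrt{\det(J^T J)} = \sqrt{\sum_{I} (\det J_I)^2}, \qquad \det J_I = \det(X_I) \prod_{i\in I} g'(x_i^T\beta),
\]
where $I$ ranges over $q$-subsets of $\{1,\ldots,n\}$ and $X_I$ denotes the $q\times q$ matrix formed by the columns of $X$ indexed by $I$. This is precisely the pointwise de Gua identity for the parallelotope spanned by the columns of $J$: the squared volume of a $q$-dimensional parallelotope in $\R^n$ is the sum of squared volumes of its orthogonal projections onto the $\binom{n}{q}$ coordinate $q$-planes.

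Each projected volume is then easy to evaluate. If $X_I$ is singular then $\det J_I \equiv 0$. Otherwise $\beta \mapsto X_I^T\beta$ is a linear automorphism of $\R^q$ and, since $g$ is an increasing diffeomorphism $\R \to (0,\pi)$, the composite map $\beta \mapsto (\phi_i(\beta))_{i\in I}$ is a diffeomorphism $\R^q \to (0,\pi)^q$. Changing variables gives $\int_{\R^q}|\det J_I|\,d\beta = \pi^q$. Let $N$ be the number of $q$-subsets $I$ with $X_I$ nonsingular; since $X$ has rank $q$ we have $1 \le N \le \binom{n}{q}$.

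The bounds now follow from the elementary chain $\max_I|a_I| \le \sqrt{\sum_I a_I^2} \le \sum_I |a_I|$. Integrating the upper inequality yields
\[
\vol(\SX) \le \sum_I \int_{\R^q}|\det J_I|\,d\beta = N\pi^q \le \binom{n}{q}\pi^q,
\]
and applying the lower inequality to any single fixed $I_0$ with $X_{I_0}$ invertible gives $\vol(\SX) \ge \int_{\R^q}|\det J_{I_0}|\,d\beta = \pi^q$. The real conceptual content sits in the first two paragraphs: once the variance-stabilising coordinates are chosen and the Pythagoras--de Gua decomposition is applied, the bounds become bookkeeping. The one step that genuinely needs verification rather than calculation is the diffeomorphism claim for the projection $\beta \mapsto (\phi_i)_{i\in I}$, on which the change of variables rests; I expect this to be the main, though modest, obstacle.
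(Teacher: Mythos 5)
Your proposal is correct and takes essentially the same route as the paper: an isometric, variance-stabilising embedding of the model into a Euclidean cube of side $\pi$, the Cauchy--Binet (Pythagoras--de Gua) expansion of $\sqrt{\det(J^T J)}$ over $q$-subsets, the fact that each subset with nonsingular minor contributes exactly $\pi^q$ while singular minors contribute $0$, and the elementary $\max \le \ell^2 \le \ell^1$ inequalities. The diffeomorphism step you flag as the main obstacle is already settled by your own observation that the projected map is a linear automorphism followed by a coordinatewise increasing diffeomorphism onto $(0,\pi)^q$; this is a slightly more direct substitute for the paper's monotonicity lemma and box-volume bounds, but the substance is the same.
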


These bounds are based on Theorem \ref{T:deGua}, which is a novel generalisation of the classical theorems of Pythagoras and de Gua
\cite[p. 207]{Wells91}
and is of independent interest.

Our result that $\vol(\SX)$ is finite has a number of theoretical consequences for the logistic regression model $\SX$, since it shows that $\SX$ satisfies the common regularity condition that its Jeffreys prior should be proper.  One consequence of this is that $\vol(\SX)$ can be directly interpreted as a measure of model complexity, since a simple, monotonic function of $\vol(\SX)$ then approximates the parametric complexity for large $n$
\cite{Rissanen96}\cite[eqn. 2.21]{Grunwald05}.
Here, the parametric complexity is an information-theoretic measure of the statistical size of $\SX$ which can be
subtracted from
the maximized log-likelihood to give a natural measure of the parsimony of $\SX$ as a model for data $y$ \cite[eqn. 2.20]{Grunwald05}.  The corresponding model-selection criterion is known as the minimum description length (MDL) criterion \cite{BarronEtAl98,Rissanen07}
and it has many desirable properties, such as almost sure consistency for parametric models and the ability to select a data-generating model from a countable set of models for all sufficiently large $n$ with probability $1$ \cite{BarronCover91}.

No previous logistic regression studies have used the volume as a measure of model complexity, though a few studies have used other variants of MDL:  \cite{HansenYu02} used a mixture MDL approach \cite{HansenYu01} in which a normal prior was placed on the regression coefficients and MDL principles were used to choose the hyper-parameters; \cite{ZhouEtAl05} and \cite{QianField02} were based on the approximation of \cite{QianKunsch98} and its $2$-part code approach; and \cite{FowlerLindblad11} used a renormalized NML criterion \cite{Rissanen01} adapted from linear regression to logistic regression with a weighting method.

The above connections with MDL show that $\vol(\SX)$ is an important measure of model complexity, but we also show that it has some remarkable geometric properties.  Perhaps the strangest and most useful property is that $\vol(\SX)$ is a discontinuous function of $X$.  Some design matrices, such as those with some rows consisting only of zeroes, are significantly less complex than nearby design matrices.  This means that a model-selection criterion based on $\vol(\SX)$ will tend to choose models with sparse design matrices over models with design matrices with many small entries.  This behaviour is analogous to (though different from) the way that $\ell^1$-regularised regression models tend to choose model fits with coefficients equal to $0$ over model fits with small coefficients \cite{Tibshirani96,Tibshirani11}.

We derive an approximation to $\vol(\SX)$ under the mild assumptions that $n$ is large, the rows of $X$ are realisations of independent and identically distributed (IID) random variables and $X$ has full rank with probability $1$, plus a more technical condition on the covariate distribution (see Section \ref{S:approxvol}).  This approximation to $\vol(\SX)$ then gives the following model-selection criterion.

\begin{defn}[Approximate volume criterion]
\label{D:volcrit}
Given a countable set of competing logistic regression models for binary data $y \in \{ 0,1 \}^n$ with $n$ observations, the {\em approximate volume criterion} advocates choosing the model $\SX$ with the smallest value of
\beq
\label{E:approxvolcritintro}
- \log p(y | \hat{\beta}(y)) + \frac{q}{2}\log \frac{\pi}{2} + \frac{1}{2}\log {n-n_0 \choose q}
\eeq
where $\log p(y | \hat{\beta}(y))$ is the maximized log-likelihood and the design matrix of $\SX$ has
$n$ rows, $q$ columns
and exactly $n_0$ rows with all entries equal to $0$.
\end{defn}

The main result of \cite{QianField02} implies that this criterion is strongly consistent, meaning that it will select the correct model almost surely as the sample size $n$ goes to infinity.  As a proof of principle, we apply this model-selection criterion to a simulated image processing problem, giving promising results (see Figure \ref{F:imageproc}).
Our approach to this problem couples the approximate volume criterion with $\ell^1$-regularisation \cite{Tibshirani96,Tibshirani11}, making our results applicable to the case $q > n$ where the number of potential covariates is larger than the number of observations (see Section \ref{S:imageproc}).

Lastly, we consider the behaviour of the logistic regression model $\SX$ for large parameter values when $X$ is generic, meaning that any $q$ of the rows of $X$ are linearly independent.  We first show that, while $\vol(\SX)$ is a discontinuous function of $X$ in general, it is continuous at generic $X$.  This raises the possibility that a closed-form expression for $\vol(\SX)$ might exist for generic $X$.  We second consider the relationship between two natural polygonal decompositions of the ideal boundaries of the natural and expectation parameter spaces of $\SX$.  The expectation parameter space is an open polytope, so its ideal boundary (the boundary of its closure) decomposes into lower-dimensional polytopes, while the ideal boundary of the natural parameter space (approximated by a sphere of large radius $r$ centred at the origin) is divided into spherical polytopes by the hyperplanes $\{ \beta \in \R^q \mid x_i \beta = 0 \}$, where each $x_i$ is a row of $X$.  We show that these two polygonal decompositions are topologically dual via the reparameterisation map, meaning that this function approximately maps $k$-dimensional polytopes in the $(q-1)$-dimensional boundary of one parameter space to $(q-1-k)$-dimensional polytopes in the boundary of the other, with this approximation becoming exact as the radius $r$ goes to infinity (see Figure \ref{F:duality}).  This highly unusual behaviour is interesting in its own right, but it also has implications for the computation of $\vol(\SX)$ (see the end of Section \ref{S:duality}).

The rest of this paper is set out as follows.  In Section \ref{S:full} we describe a model which is geometrically a Euclidean cube and into which all logistic regression models for $n$ observations can be isometrically embedded.  We then calculate the Fisher information metric of a logistic regression model $\SX$ and show that the corresponding volume $\vol(\SX)$ is unchanged by rescaling the covariates (Section \ref{S:logreg}).  In Section \ref{S:finite}, we use the embedding of $\SX$ into the Euclidean cube to prove Theorem \ref{T:volSXbound}.  We then show that $\vol(\SX)$ is a discontinuous function of $X$ (Section \ref{S:discont}) before deriving the approximate volume criterion of Definition \ref{D:volcrit} and applying it to an image processing problem (Section \ref{S:compl}).  We show that $\vol(\SX)$ is continuous at generic $X$ and prove the above topological duality in Section \ref{S:largebeta}.  We then describe some of the discontinuities in $\vol(\SX)$ which can occur at non-generic $X$ in Section \ref{S:nongeneric}, before finishing with some concluding remarks in Section \ref{S:conc}.

\section{The saturated model for binary data}
\label{S:full}

In this section, we introduce a statistical model into which all logistic regression models with $n$ observations can be isometrically embedded (though we will not describe the embedding until Section \ref{S:finite}).

Consider binary data $y \in \{ 0,1\}^n$ with components $y_1, \ldots, y_n$ which are realizations of $n$ independent random variables $Y_1, \ldots, Y_n$.  The most general stochastic model for this data, which we call the {\em saturated model}, has a separate model parameter for each observation.  One parameterisation for this model is in terms of a parameter $\ex \in (0,1)^n$ interpreted as the probability $\ex_i = P(Y_i=1) = \E Y_i$.  The likelihood function for this parameterisation is therefore
$$ \prod_{i=1}^n \ex_i^{y_i} (1 - \ex_i)^{1 - y_i}. $$
Alternatively, we can parameterise the saturated model with the log-odds parameter $\lambda \in \R^n$ which is related to the parameter $\ex$ by
\beq\label{E:lambda}
\lambda_i = \log \left( \frac{\ex_i}{1-\ex_i} \right) \mbox{ or, equivalently, } \ex_i = \frac{\exp(\lambda_i)}{1 + \exp(\lambda_i)}.
\eeq
The log-odds parameterisation is of particular interest to us because each logistic regression model is a stochastic model of the above form with the log-odds constrained to lie in a linear subspace.

From (\ref{E:lambda}), $1-\ex_i = (1 + \exp(\lambda_i))^{-1}$, so the log-likelihood for the log-odds parameterisation is
\begin{eqnarray}
\ell(\lambda)
&=&\nonumber \log \left( \prod_{i=1}^n \left( \frac{\exp(\lambda_i)}{1 + \exp(\lambda_i)}\right)^{y_i}
\left( \frac{1}{1 + \exp(\lambda_i)}\right)^{1-y_i}  \right)\\
&=& y^T \lambda - \sum_{i=1}^n \log \left( 1 + \exp(\lambda_i)\right), \label{E:expfamfull}
\end{eqnarray}
where $y$ and $\lambda$ are interpreted as column matrices in (\ref{E:expfamfull}).
This shows that the saturated model for binary data is an exponential family \cite[\S 2.2]{KassVos97} and that $y$ is a natural sufficient statistic with $\lambda$ the corresponding natural parameter.  Since $\ex_i = \E Y_i$, $\ex$ is the expected value of the sufficient statistic, so $\ex$ is the corresponding expectation parameter for the exponential family.

Recall that the Fisher information metric of a stochastic model with parameter space $U$ is a Riemannian metric $g_U$ on $U$ given by either of the following expressions
\beq
\label{E:fisherinfodefn}
g_U = \E[(\nabla \ell)(\nabla \ell)^T] = - \E[\mbox{Hess}(\ell)]
\eeq
where $U \subseteq \R^q$ is an open set, $\ell:U \to \R$ is the log-likelihood function, $\nabla \ell$ is the gradient of $\ell$  (interpreted as a column matrix in (\ref{E:fisherinfodefn})), $\mbox{Hess}(\ell)$ is the Hessian matrix of $\ell$ and the expectation is taken over the observed data \cite[\S 2.2]{AmariNagaoka00}.  The second equality of (\ref{E:fisherinfodefn}) assumes certain regularity conditions, which are satisfied by all models in this paper.  We will sometimes call $g_U$ the Fisher information {\em matrix} of the parameterisation $U$, to distinguish it from the more abstract Fisher information {\em metric} of the stochastic model, which is independent of the parameterisation because all reparameterisation maps are isometries (e.g., by Lemma \ref{L:pullback}).  By Chentsov's theorem \cite{Chentsov78,AyEtAl12}, the Fisher information metric is, in some sense, the only natural metric on a stochastic model.

\begin{lemma}
\label{L:metric_fullmodel_logodds}
The Fisher information matrix for the log-odds parameterisation of the saturated model is a diagonal matrix $D_\lambda$ whose $i^{th}$ diagonal component is
$$ (D_\lambda)_{ii} = \frac{1}{4}\cosh^{-2}(\lambda_i/2).$$
\end{lemma}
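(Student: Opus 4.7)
The plan is to compute $g_U$ from its Hessian form, namely $g_U = -\E[\mathrm{Hess}(\ell)]$, applied directly to the log-likelihood (\ref{E:expfamfull}). Because $\ell(\lambda) = y^T\lambda - \sum_i \log(1+\exp(\lambda_i))$ is affine in $y$, the Hessian will not depend on $y$ at all; hence the expectation is trivial and the only real work is differentiating twice and rewriting the answer in the $\cosh^{-2}(\lambda_i/2)$ form.

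First I would compute the score: $\partial \ell/\partial \lambda_i = y_i - \exp(\lambda_i)/(1+\exp(\lambda_i)) = y_i - \ex_i$, using the reparameterisation (\ref{E:lambda}). The second derivatives then separate cleanly: $\partial^2 \ell/\partial \lambda_i \partial \lambda_j$ vanishes for $i\ne j$ because the log-partition sum is a sum of one-variable functions, while $\partial^2 \ell/\partial \lambda_i^2 = -\exp(\lambda_i)/(1+\exp(\lambda_i))^2$. This already shows the Fisher matrix is diagonal, and, since nothing random remains, $(g_U)_{ii} = \exp(\lambda_i)/(1+\exp(\lambda_i))^2$.

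Finally I would verify the trigonometric rewriting. Multiplying numerator and denominator by $\exp(-\lambda_i)$ gives $(g_U)_{ii} = 1/(\exp(\lambda_i/2) + \exp(-\lambda_i/2))^2$, and since $\cosh(\lambda_i/2) = \tfrac12(\exp(\lambda_i/2) + \exp(-\lambda_i/2))$, this equals $\tfrac14 \cosh^{-2}(\lambda_i/2)$, as claimed.

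There is no genuine obstacle here; the statement is essentially a direct computation in an exponential family, where the cumulant function $\sum_i \log(1+\exp(\lambda_i))$ supplies the Fisher information through its Hessian. The only minor subtlety worth flagging in the write-up is noting why the expectation drops out (affine dependence on $y$, equivalently the standard exponential-family identity $g_U = \mathrm{Hess}(\psi)$ for cumulant function $\psi$), and the elementary algebra that converts $\mu_i(1-\mu_i)$ into the symmetric $\cosh^{-2}(\lambda_i/2)$ form that will be convenient for the volume computations in subsequent sections.
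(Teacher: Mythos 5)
Your proposal is correct and follows essentially the same route as the paper: both apply the Hessian form of (\ref{E:fisherinfodefn}) to the log-likelihood (\ref{E:expfamfull}), observe that the off-diagonal second derivatives vanish and that the Hessian is free of $y$ (so the expectation is trivial), and then rewrite $\exp(\lambda_i)/(1+\exp(\lambda_i))^2$ as $\tfrac{1}{4}\cosh^{-2}(\lambda_i/2)$. Your extra remark on why the expectation drops out (affine dependence on $y$, i.e.\ the exponential-family identity) is a harmless elaboration of the same computation.
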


\begin{proof}
This follows easily from (\ref{E:expfamfull}) and (\ref{E:fisherinfodefn}), for example
$$(D_\lambda)_{ii} = -\frac{\partial^2 \ell}{\partial \lambda_i^2}
= \frac{\exp(\lambda_i)}{(1 + \exp(\lambda_i))^2} = \frac{1}{(\exp(\lambda_i/2) + \exp(-\lambda_i/2))^2}
= \frac{1}{4\cosh^2 (\lambda_i/2)}.$$
\end{proof}

Lemma \ref{L:metric_fullmodel_logodds} implies that the saturated model is isometric to an $n$-fold product of isometric $1$-dimensional Riemannian manifolds.  But since all $1$-dimensional Riemannian manifolds are Euclidean, this in turn implies that the saturated model is isometric to either a Euclidean cube or Euclidean space.  We now give a parameterisation for the saturated model which realises this isometry.

Let $\Xi$ be the open cube $\Xi \defeq (-\frac{\pi}{2}, \frac{\pi}{2})^n$ and define the parameter $\xi = (\xi_1, \ldots, \xi_n) \in \Xi$ by
\begin{equation}\label{E:xi_param}
\xi_i = \arcsin(2 \ex_i - 1) \mbox{ or, equivalently, } \ex_i = \frac{1}{2}(1 + \sin \xi_i)
\end{equation}
for each $i=1, \ldots, n$.
In light of the following lemma, we will call this the {\em Euclidean parameterisation} of the saturated model.

\begin{lemma}
\label{L:eucl_cube}
The Fisher information matrix $g_\Xi$ for the parameterisation (\ref{E:xi_param}) is the identity matrix everywhere in $\Xi$.  Therefore the saturated model for binary data is isometric to an open, $n$-dimensional Euclidean cube of side-length $\pi$.
\end{lemma}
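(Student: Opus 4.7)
The plan is to use the fact that a reparameterisation is an isometry between the two induced Fisher information metrics, so the Fisher information matrix $g_\Xi$ of the new parameterisation is the pullback of $D_\lambda$ by the Jacobian of the map $\xi \mapsto \lambda$. Since (\ref{E:lambda}) and (\ref{E:xi_param}) both act componentwise, this Jacobian is diagonal, and $g_\Xi$ will automatically be diagonal with entries $(g_\Xi)_{ii} = (d\lambda_i/d\xi_i)^2 (D_\lambda)_{ii}$. So the whole computation reduces to $n$ independent one-variable calculations, and I only need to check that $(d\lambda_i/d\xi_i)^2 (D_\lambda)_{ii} = 1$ for each $i$.

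First I would combine (\ref{E:lambda}) and (\ref{E:xi_param}) to get $\lambda_i = \log\bigl((1+\sin\xi_i)/(1-\sin\xi_i)\bigr)$, and differentiate to obtain $d\lambda_i/d\xi_i = 2/\cos\xi_i$ (this uses $\cos\xi_i > 0$ throughout $\Xi$). Next I would substitute this expression for $\lambda_i$ into $\cosh(\lambda_i/2) = \tfrac{1}{2}(e^{\lambda_i/2} + e^{-\lambda_i/2})$ and simplify; the square roots combine to give $\cosh(\lambda_i/2) = 1/\cos\xi_i$, and hence $(D_\lambda)_{ii} = \tfrac{1}{4}\cos^2\xi_i$ by Lemma \ref{L:metric_fullmodel_logodds}. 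Multiplying, $(g_\Xi)_{ii} = (4/\cos^2\xi_i)\cdot(\cos^2\xi_i/4) = 1$, so $g_\Xi$ is the identity matrix on $\Xi$.

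Finally, since $\Xi = (-\pi/2,\pi/2)^n$ is an open Euclidean cube of side length $\pi$ carrying the identity Riemannian metric, and since this parameterisation covers the entire saturated model bijectively (because (\ref{E:xi_param}) is a diffeomorphism $(0,1) \to (-\pi/2,\pi/2)$ in each coordinate, and so is $\mu \mapsto \lambda$), the saturated model is isometric to this cube. The main thing to be careful about is just the algebraic identity $\cosh(\lambda_i/2) = 1/\cos\xi_i$, which is a small but non-obvious simplification; everything else is routine. No serious obstacle is expected.
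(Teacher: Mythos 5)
Your proof is correct, and your calculations check out ($d\lambda_i/d\xi_i = 2/\cos\xi_i$ and $\cosh(\lambda_i/2) = 1/\cos\xi_i$ are both right, and their combination gives $(g_\Xi)_{ii}=1$), but you take a genuinely different route from the paper. The paper's proof works entirely inside the $\xi$ parameterisation: it writes the log-likelihood $\ell(\xi) = -n\log 2 + \sum_i \log(1+\epsilon_i\sin\xi_i)$ with $\epsilon_i = 2y_i-1$, computes the Hessian directly (off-diagonal terms vanish, and $\partial^2\ell/\partial\xi_i^2 = -1/(1+\epsilon_i\sin\xi_i)$), and then takes the expectation over the Bernoulli data using $\ex_i = \tfrac12(1+\sin\xi_i)$ to get $1$ on the diagonal; it never invokes Lemma \ref{L:metric_fullmodel_logodds} or a change-of-variables argument. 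Your approach instead pulls back the log-odds metric $D_\lambda$ through the diagonal Jacobian of $\xi\mapsto\lambda$, which reduces everything to a one-variable algebraic identity and reuses Lemma \ref{L:metric_fullmodel_logodds}; the reparameterisation-pullback fact you rely on is exactly the paper's Lemma \ref{L:pullback}, which is stated and proved only in the following section, but since its proof uses nothing beyond the definition (\ref{E:fisherinfodefn}) and the chain rule there is no circularity — it is just a forward reference. The trade-off: the paper's argument is self-contained and exhibits the expectation step explicitly (which is where the statistical content lies), while yours is shorter, makes the role of the $\arcsin\circ\tanh$ change of variables as a metric-flattening map more transparent, and avoids recomputing expectations by recycling the already-computed $D_\lambda$. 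Both are complete proofs of the lemma, including the final isometry claim.
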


\begin{proof}
From (\ref{E:xi_param}), $1 - \ex_i
= \frac{1}{2}(1 - \sin \xi_i)$, so the log-likelihood function with respect to the $\xi$ parameterisation is $\ell:\Xi \to \R$ given by
$$ \ell(\xi) = -n \log 2 + \sum_{i=1}^n \log(1 + \epsilon_i \sin \xi_i) $$
where $\epsilon_i = 2 y_i - 1$.  Therefore,
$$\frac{\partial \ell}{\partial \xi_i}
= \frac{\epsilon_i \cos \xi_i}{1 + \epsilon_i \sin \xi_i}$$
and so
$\partial^2 \ell/\partial \xi_i \partial \xi_j = 0$
if $i \not= j$, hence the Fisher information matrix is diagonal.  Also,
\begin{eqnarray*}
\frac{\partial^2 \ell}{\partial \xi_i^2}
&=& - \frac{\epsilon_i \sin \xi_i (1 + \epsilon_i \sin \xi_i)+ \epsilon_i^2 \cos^2 \xi_i}{(1 + \epsilon_i \sin \xi_i)^2} \\
&=& - \frac{\epsilon_i \sin \xi_i + \sin^2 \xi_i + \cos^2 \xi_i}{(1 + \epsilon_i \sin \xi_i)^2} \mbox{ since $\epsilon_i = \pm 1$}\\
&=&  - \frac{1}{1 + \epsilon_i \sin \xi_i}.
\end{eqnarray*}
Now, if $f:\R \to \R$ is any function then $\E [f(\epsilon_i)] = \ex_i f(1) + (1-\ex_i) f(-1)$ by definition of the expectation, so using the relations $\ex_i = \frac{1}{2}(1 + \sin \xi_i)$ and $1 - \ex_i = \frac{1}{2}(1 - \sin \xi_i)$ we have
$$ - \E \left[ \frac{\partial^2 \ell}{\partial \xi_i^2} \right]
= \frac{\frac{1}{2}(1 + \sin \xi_i)}{1 + \sin \xi_i} + \frac{\frac{1}{2}(1 - \sin \xi_i)}{1 - \sin \xi_i} = 1,$$
proving the lemma.
\end{proof}

For future reference we note from (\ref{E:lambda}) and (\ref{E:xi_param}) that
$(1 + \exp(-\lambda_i))^{-1} = \frac{1}{2}(1 + \sin \xi_i) $
so
$$ \sin \xi_i = \frac{2}{1 + \exp(-\lambda_i)} -1 = \frac{1 - \exp(-\lambda_i)}{1 + \exp(-\lambda_i)}
= \frac{\exp(\lambda_i/2) - \exp(-\lambda_i/2)}{\exp(\lambda_i/2) + \exp(-\lambda_i/2)} = \tanh \frac{\lambda_i}{2},$$
hence the Euclidean and log-odds parameterisations are related by
\begin{equation}\label{E:Eucl_logodds}
\xi_i = \arcsin \left(\tanh \frac{\lambda_i}{2} \right)
\end{equation}
where $\arcsin$ has domain $(-1,1)$ and range $(-\frac{\pi}{2},\frac{\pi}{2})$.

\section{Logistic regression models and their volumes}
\label{S:logreg}

Partly to establish our notation, this section recalls the definition of a logistic regression model and its volume before showing that the volume is invariant under re-scaling the covariates.

Here and throughout this paper, let $X$ be a full-rank, real $n \times q$ matrix with $q \le n$.  Given such an $X$, there is a unique logistic regression model $\SX$ which is the sub-model of the saturated model of Section \ref{S:full} whose log-odds parameters $\lambda \in \R^n$ are all of the form
\begin{equation}\label{E:logreg_submodel}
\lambda = X \beta
\end{equation}
for some $\beta \in \R^q$, to be estimated \cite{McCullaghNelder83}.
We consider $\beta$ to be a column matrix and we consider the $i^{th}$ row $x_i$ of $X$ to be a row matrix, so $\lambda_i = x_i \beta$ is a $1 \times 1$ matrix, considered to simply be a real number.

Substituting (\ref{E:logreg_submodel}) into (\ref{E:expfamfull}) shows that $\SX$ is an exponential family with natural parameter $\beta$ and corresponding natural sufficient statistic $X^T y$, where $y \in \{ 0,1 \}^n$ is the observed data.

We now calculate the Fisher information matrix of $\SX$ for the natural parameter space.

\begin{lemma}
\label{L:FImatrix_logreg}
At $\beta \in \R^q$, the Fisher information matrix of the natural parameterisation of $\SX$ is the $q \times q$ matrix
$$X^T D_{X \beta} \,  X$$
where $D$ is the diagonal matrix of Lemma \ref{L:metric_fullmodel_logodds} but is here evaluated at $\lambda = X \beta$.
\end{lemma}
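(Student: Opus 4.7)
The plan is to substitute the logistic regression constraint $\lambda = X\beta$ into the log-likelihood (\ref{E:expfamfull}) of the saturated model and compute the Hessian with respect to $\beta$ via the chain rule, reducing everything to quantities already calculated in the proof of Lemma \ref{L:metric_fullmodel_logodds}. Since the map $\beta \mapsto X\beta$ is linear with Jacobian $X$, all the derivatives will simply be transported by $X$ on each side.

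Concretely, I would first write the log-likelihood of $\SX$ as
\[ \ell(\beta) = y^T X \beta - \sum_{i=1}^n \log\bigl(1 + \exp(x_i \beta)\bigr), \]
obtained by plugging $\lambda_i = x_i\beta$ into (\ref{E:expfamfull}). Since $\partial \lambda_i / \partial \beta_j = X_{ij}$, the chain rule gives
\[ \frac{\partial \ell}{\partial \beta_j} = \sum_{i=1}^n X_{ij}\, \frac{\partial \ell}{\partial \lambda_i}\bigg|_{\lambda = X\beta}, \]
and a second application, combined with the fact (established in the proof of Lemma \ref{L:metric_fullmodel_logodds}) that $\partial^2\ell/\partial\lambda_i\partial\lambda_k = 0$ for $i\neq k$, yields
\[ \frac{\partial^2 \ell}{\partial \beta_j \partial \beta_k} = \sum_{i=1}^n X_{ij} X_{ik}\, \frac{\partial^2 \ell}{\partial \lambda_i^2}\bigg|_{\lambda = X\beta}. \]

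In matrix form this is $\mbox{Hess}_\beta(\ell) = X^T \mbox{Hess}_\lambda(\ell)\big|_{\lambda = X\beta}\, X$. Taking negative expectations of both sides and applying Lemma \ref{L:metric_fullmodel_logodds} identifies $-\E[\mbox{Hess}_\lambda(\ell)]$ as the diagonal matrix $D_{X\beta}$, which gives the claimed formula $X^T D_{X\beta} X$. (As an alternative route, one could invoke the pullback lemma \ref{L:pullback} directly: the inclusion $\beta \mapsto X\beta$ has constant Jacobian $X$, so the Fisher metric on $\SX$ is the pullback $X^T D_{X\beta} X$ of the saturated model's metric.)

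The main obstacle here is essentially nonexistent; this is a routine chain-rule computation in a situation that has been specifically set up to make it trivial. The only minor subtlety is a conceptual one: $\beta \mapsto X\beta$ is not a reparameterization of the saturated model but a parameterization of a submodel, so one should verify (which is immediate from the log-likelihood display above) that the log-likelihood of $\SX$ at $\beta$ really does coincide with that of the saturated model at $\lambda = X\beta$, so that the two Hessians are related by the chain rule in the stated way.
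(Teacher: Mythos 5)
Your proof is correct. It differs mildly from the paper's: the paper disposes of this lemma in one line by invoking Lemma \ref{L:pullback} with $\phi(\beta)=X\beta$, so that $J=X$ and $g_V=D_{X\beta}$, i.e.\ it pulls back the saturated model's metric using the gradient (score outer-product) form of (\ref{E:fisherinfodefn}); you instead substitute $\lambda=X\beta$ into (\ref{E:expfamfull}) and compute the Hessian in $\beta$ directly via the chain rule, using the negative-expected-Hessian form of (\ref{E:fisherinfodefn}) and the fact that the saturated log-likelihood separates over the $\lambda_i$ so the cross second derivatives vanish. Both arguments are valid (the regularity conditions for the second equality in (\ref{E:fisherinfodefn}) hold here, and in fact the Hessian in $\lambda$ does not depend on $y$, so the expectation is trivial), and your parenthetical alternative is exactly the paper's proof. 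Your closing remark is also the right conceptual point and matches the paper's framing: Lemma \ref{L:pullback} only requires $\ell_U=\ell_V\circ\phi$, not that $\phi$ be a reparameterisation, and this identity is immediate from the displayed log-likelihood.
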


We will prove Lemma \ref{L:FImatrix_logreg} using the following general lemma (which is well-known but proved below because a published proof is not known to the author).

\begin{lemma}
\label{L:pullback}
Let $U$ and $V$ be parameter spaces for two stochastic models and let $\ell_U: U \to \R$ and $\ell_V: V \to \R$ be the corresponding log-likelihood functions.  If $\phi:U \to V$ is a differentiable function and $\ell_U = \ell_V \circ \phi$ then
$$g_U = J^T g_V J$$
where $g_U$ and $g_V$ are the Fisher information matrices of the two parameterisations and $J$ is the Jacobian matrix of $\phi$ (here, $g_U$ and $J$ are evaluated at any $u \in U$ and $g_V$ is evaluated at $\phi(u) \in V$).  In other words, $g_U$ is the pull-back of $g_V$ via $\phi$.
\end{lemma}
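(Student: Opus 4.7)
The plan is to use the first characterisation of the Fisher information matrix from (\ref{E:fisherinfodefn}), namely $g_U = \E[(\grad \ell_U)(\grad \ell_U)^T]$ and similarly for $g_V$, since this avoids second derivatives and hence keeps the proof essentially algebraic. The identity $\ell_U = \ell_V \circ \phi$ is a pointwise identity of functions of the data (for each fixed $u$, both sides are the same random variable), so it can be differentiated with respect to the parameter $u$ under the expectation.

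First I would apply the chain rule. Write $u \in U \subseteq \R^p$ and $v \in V \subseteq \R^r$ and treat $\grad$ as returning a column vector of partial derivatives. Then, with $J = J(u)$ the $r \times p$ Jacobian matrix of $\phi$ at $u$, the chain rule for $\ell_U(u) = \ell_V(\phi(u))$ gives, in row-vector form, $(\grad \ell_U)^T = (\grad \ell_V)^T J$ evaluated at $(u, \phi(u))$, and hence
\beq
\grad \ell_U(u) = J(u)^T \, \grad \ell_V(\phi(u)).
\eeq

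Next I would form the outer product and take expectations. From the previous display,
\beq
(\grad \ell_U)(\grad \ell_U)^T = J^T \, (\grad \ell_V)(\grad \ell_V)^T \, J.
\eeq
The key observation is that $J$ depends only on the parameter $u$ and not on the data, whereas the expectation in (\ref{E:fisherinfodefn}) is taken over the data. Hence $J^T$ and $J$ factor out of the expectation, and
\beq
g_U = \E\bigl[(\grad \ell_U)(\grad \ell_U)^T\bigr] = J^T \, \E\bigl[(\grad \ell_V)(\grad \ell_V)^T\bigr] \, J = J^T g_V J,
\eeq
with $g_V$ evaluated at $\phi(u)$, as required.

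There is no real obstacle here; the proof is a direct computation whose only subtlety is keeping track of the fact that the chain-rule factor $J$ is deterministic in $u$ and therefore passes through the data expectation. If one preferred the Hessian definition of $g_V$ one would have to deal with an extra second-derivative term involving the Hessian of $\phi$, which would need to be shown to vanish in expectation using $\E[\grad \ell_V] = 0$ at the true parameter; using the outer-product form sidesteps this complication entirely.
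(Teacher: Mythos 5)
Your proposal is correct and follows essentially the same route as the paper's own proof: both use the score (outer-product) form $g_U = \E[(\grad \ell_U)(\grad \ell_U)^T]$, the chain-rule identity $\grad \ell_U = J^T \grad \ell_V$, and the fact that $J$ is non-random so it passes through the expectation. Your closing remark about why the Hessian form is less convenient is a nice observation but not needed; the argument as given matches the paper.
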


\begin{proof}
By (\ref{E:fisherinfodefn}), $g_U = \E[(\nabla \ell_U)(\nabla \ell_U)^{T}]$ and $g_V = \E[(\nabla \ell_V)(\nabla \ell_V)^{T}]$, where $\nabla \ell_U$ and $\nabla \ell_V$ are gradients of $\ell_U$ and $\ell_V$, and recall that $\nabla \ell_U = J^T \nabla \ell_V$.  Therefore
$$ g_U = \E[(\nabla \ell_U)(\nabla \ell_U)^{T}] = \E[J^T (\nabla \ell_V)(\nabla \ell_V)^{T} J]
= J^T \E[(\nabla \ell_V)(\nabla \ell_V)^{T}] J = J^T g_V J$$
as required.
\end{proof}

\begin{proof}[Proof of Lemma \ref{L:FImatrix_logreg}]
Simply apply Lemma \ref{L:pullback} to the case where $U$ is the natural ($\beta$) parameterisation of the logistic regression model, $V$ is the natural (log-odds) parameterisation of the saturated model and $\phi$ is the function $\beta \mapsto X \beta$.  For then $J=X$ and $g_V=D_{X\beta}$ at $\phi(\beta)$ by Lemma \ref{L:metric_fullmodel_logodds}.
\end{proof}

Now, recall that, to any oriented $q$-dimensional Riemannian manifold $M$ with metric tensor $g$, there is a natural volume form, given in local co-ordinates as $\sqrt{\det g}$ times the standard volume form on $\R^q$, and there is a natural notion of the volume of $M$, obtained by integrating this form over $M$ \cite[p. 329-30]{KassVos97}.
So by Lemma \ref{L:FImatrix_logreg}, the {\em volume density} of $\SX$ at a point $\beta \in \R^q$ of the natural parameter space of $\SX$ is
$$ \sqrt{\det(X^T D_{X \beta} \, X)} $$
and the {\em volume} of the logistic regression model $\SX$ is
\begin{equation}\label{E:vol_defn}
\vol(\SX) \defeq  \int_{\R^q} \sqrt{\det(X^T D_{X \beta} \, X)} \, d\beta .
\end{equation}
When $\vol(\SX)$ is finite and non-zero, the Jeffreys prior is proper, and is therefore equal to $\sqrt{\det(X^T D_{X \beta} \, X)}/\vol(\SX)$ at a point $\beta \in \R^q$.

\begin{lemma} \label{L:volzero}
$\vol(\SX) > 0$ if and only if $X$ has rank $q$.
\end{lemma}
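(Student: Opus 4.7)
The strategy is to read off the sign of the integrand in the volume formula
$$\vol(\SX) = \int_{\R^q} \sqrt{\det(X^T D_{X\beta}\, X)} \, d\beta$$
from the rank of $X$. The diagonal matrix $D_{X\beta}$ from Lemma \ref{L:metric_fullmodel_logodds} has entries $\frac{1}{4}\cosh^{-2}((X\beta)_i/2)$, which are strictly positive for every $\beta \in \R^q$, so $D_{X\beta}$ is positive definite everywhere. The proof then reduces to the elementary linear-algebra fact that, for a positive definite diagonal matrix $D$, the $q \times q$ matrix $X^T D X$ is positive definite if and only if $X$ has rank $q$, and is singular otherwise.

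For the forward direction, I would assume $X$ has rank $q$, so the induced linear map $\R^q \to \R^n$ is injective. For any nonzero $v \in \R^q$ we then have $Xv \neq 0$, and hence
$$v^T X^T D_{X\beta}\, X v = (Xv)^T D_{X\beta}\, (Xv) > 0$$
because $D_{X\beta}$ has strictly positive diagonal entries. Thus $X^T D_{X\beta}\, X$ is positive definite for every $\beta$, so $\det(X^T D_{X\beta}\, X) > 0$, and the integrand in the volume formula is a strictly positive continuous function of $\beta$. Its integral over any ball of positive radius is therefore strictly positive, giving $\vol(\SX) > 0$.

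For the converse, I would assume $X$ has rank strictly less than $q$, so $\ker X$ contains some $v \neq 0$. Then $(Xv)^T D_{X\beta}\, (Xv) = 0$, showing $X^T D_{X\beta}\, X$ has $v$ in its kernel and is therefore singular for every $\beta$. Consequently $\det(X^T D_{X\beta}\, X) \equiv 0$ on $\R^q$, and the integral in (\ref{E:vol_defn}) vanishes, giving $\vol(\SX) = 0$.

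There is no serious obstacle here: both directions are immediate once one observes that the rank of $X^T D X$ equals the rank of $X$ whenever $D$ is positive definite, and that the integrand is continuous so that pointwise positivity transfers to integral positivity. The only mild care needed is to note that finiteness of the integral is not at issue for this lemma (it is the content of Theorem \ref{T:volSXbound}); here we only need strict positivity versus zero.
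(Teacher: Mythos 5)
Your proof is correct and follows essentially the same route as the paper: positive definiteness of $X^T D_{X\beta}\, X$ (hence a strictly positive integrand) when $X$ has rank $q$, and singularity (hence an identically zero integrand) when the rank is less than $q$. You simply spell out the quadratic-form argument that the paper leaves implicit, which is fine.
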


\begin{proof}
If $X$ has rank $q$ then $X^T D_{X \beta} \, X$ is a positive definite matrix so it has a strictly positive determinant, hence the volume density is strictly positive everywhere and $\vol(\SX) > 0$.  On the other hand, if $X$ has rank less than $q$ then $\det(X^T D_{X \beta} \, X) = 0$ everywhere, so $\vol(\SX) = 0$.
\end{proof}

The following lemma shows that the volume is invariant under changes to the design matrix $X$, such as rescaling, which do not change its column space $\col(X)$ (recall that the column space of $X$ is the vector subspace of $\R^n$ spanned by the $q$ columns of $X$).

\begin{lemma}
If $X$ and $\overline{X}$ are $n \times q$ matrices with $\col(X) = \col(\overline{X})$ then $\vol(\SX) = \vol(\SbarX)$.
\end{lemma}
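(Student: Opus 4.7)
The plan is to exploit the fact that $\SX$ and $\SbarX$ are really the same sub-model of the saturated model, merely carrying two different natural parameterisations. First I would observe that since $X$ and $\overline{X}$ have rank $q$ and span the same $q$-dimensional column space in $\R^n$, there is a unique invertible $q \times q$ matrix $A$ with $\overline{X} = X A$. The linear map $\phi: \R^q \to \R^q$ defined by $\phi(\beta) = A^{-1} \beta$ then provides a reparameterisation between the two natural parameter spaces, because $X \beta = X A A^{-1} \beta = \overline{X} \phi(\beta)$ shows that both parameterisations realise the same family of log-odds vectors $\lambda$, and hence the same stochastic model.

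From here there are two essentially equivalent finishes. The conceptual route is to invoke Lemma \ref{L:pullback}: the log-likelihoods of the two parameterisations of this shared sub-model agree along $\phi$, so the Fisher information matrices satisfy $g_{\SX} = J^T g_{\SbarX} J$ with $J$ the Jacobian of $\phi$. This says $\phi$ is an isometry between $(\R^q, g_{\SX})$ and $(\R^q, g_{\SbarX})$, and Riemannian volume is an isometry invariant, so $\vol(\SX) = \vol(\SbarX)$.

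The more concrete route is a direct change of variables in the integral (\ref{E:vol_defn}). Substituting $\beta = A \overline{\beta}$ gives $X \beta = \overline{X} \overline{\beta}$ and $d\beta = |\det A| \, d\overline{\beta}$, so
\[
\vol(\SX) = \int_{\R^q} \sqrt{\det(X^T D_{X\beta} X)}\, d\beta = \int_{\R^q} |\det A|\sqrt{\det(X^T D_{\overline{X}\overline{\beta}}\, X)} \, d\overline{\beta}.
\]
Pulling $|\det A|$ inside as $(\det A)^2$ and absorbing it through $A^T(\cdot)A$ turns the integrand into $\sqrt{\det(\overline{X}^T D_{\overline{X}\overline{\beta}}\, \overline{X})}$, which is precisely the volume density of $\SbarX$, giving $\vol(\SbarX)$.

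There is no serious obstacle here; the lemma is really a sanity check that $\vol(\SX)$ depends only on the underlying model and not on the choice of basis for $\col(X)$. The only point needing care is to record that $A$ exists and is invertible (which uses the rank-$q$ assumption on both matrices), and to keep track of the factor $|\det A|$ through the determinant manipulation.
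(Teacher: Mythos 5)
Your proof is correct and takes essentially the same route as the paper: the paper likewise produces the change-of-basis matrix $M$ with $\overline{X}=XM$, invokes Lemma \ref{L:pullback} to conclude the two parameterisations are isometric, and notes the same direct change-of-variables alternative in (\ref{E:vol_defn}). The only difference is that the paper first disposes of the degenerate case $\dim\col(X)<q$ (both volumes vanish by Lemma \ref{L:volzero}), whereas you assume rank $q$ throughout; adding that one line makes the arguments match exactly.
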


\begin{proof}
If $\col(X)$ has dimension less than $q$ then the ranks of $X$ and $\overline{X}$ are both less than $q$ so $\vol(\SX) = \vol(\SbarX)= 0$ by Lemma \ref{L:volzero}.

If $\col(X)$ has dimension $q$ then the columns of $X$ and $\overline{X}$ both form bases for $\col(X)$, so there exists an invertible $q \times q$ matrix $M$ (the change-of-basis matrix) so that $\overline{X} = X M$.  If we set $\bar{\beta} = M^{-1} \beta$ then $\overline{X} \bar{\beta} = X \beta$ and hence $\bar{\ell}(\bar{\beta}) = \ell(\beta)$, where $\bar{\ell}$ and $\ell$ are the two likelihood functions, so Lemma \ref{L:pullback} shows that the two models are isometric and hence have the same volumes.
Alternatively, it is not hard to show $\vol(\SX) = \vol(\SbarX)$ directly by effecting a change of variables $\bar{\beta} = M^{-1} \beta$ in the definition (\ref{E:vol_defn}).
\end{proof}

\section{Bounds on $\vol(\SX)$}
\label{S:finite}

This section establishes the volume bounds of Theorem \ref{T:volSXbound} and proves a generalisation of Pythagoras' and de Gua's theorems along the way.

As above, let $X$ be a real, full-rank, $n \times q$ matrix with $q \le n$, let $\SX$ be the corresponding logistic regression model and let $\Xi$ be the Euclidean parameter space of the saturated model with $n$ observations.
Define $\phi: \R^q \to \Xi$ by $\phi = (\phi_1, \ldots, \phi_n)$ where
\beq \label{E:phi}
\phi_i(\beta) = \arcsin \left(\tanh \frac{x_i \beta}{2} \right)
\eeq
and $x_i$ is the $i^{th}$ row of $X$ (recall that $x_i$ is a row matrix and $\beta$ is a column matrix so $x_i \beta$ is a $1 \times 1$ matrix, i.e., a real number).  As in the comment following (\ref{E:Eucl_logodds}), we take $\arcsin$ in (\ref{E:phi}) to have domain $(-1,1)$ and range $(-\frac{\pi}{2},\frac{\pi}{2})$.
When the design matrix $X$ is not clear from the context, we will write $\phi_X$ instead of $\phi$.

By (\ref{E:Eucl_logodds}) and (\ref{E:logreg_submodel}), $\phi$ maps the natural parameter space of $\SX$ into the Euclidean parameter space $\Xi$ of the saturated model in a way which respects likelihoods.  So by Lemma \ref{L:pullback}, $\phi$ is a local isometry onto its image.  We will show that $\phi$ is injective, so it will follow that $\vol(\SX)$ is the $q$-dimensional Euclidean volume (i.e., Hausdorff measure) of the image $\phi(\R^q)$ of $\phi$ inside the Euclidean cube $\Xi$.  This does not guarantee that $\vol(\SX)$ is finite, however, since an infinitely long curve can be embedded into a finite cube by spiraling around a circle, for example.  So in Lemma \ref{L:monotonic} we will show that the embedding $\phi$ does not exhibit such non-monotonic behaviour.  We will then use a novel generalization of Pythagoras' and de Gua's theorems (Lemma \ref{L:deGua} and Theorem \ref{T:deGua}) to bound the volumes of logistic regression models (Theorem \ref{T:volSXbound}).  In particular, this will imply that $\vol(\SX)$ is always finite.

We begin with the generalization of Pythagoras' and de Gua's theorems.  For any set $I \subseteq \{ 1, \ldots, n \}$ with $q$ elements, say $I = \{ i_1, \ldots, i_q \}$ where $i_1 < \dots < i_q$, define $\rho_I:\R^n \to \R^q$ to be the projection of $\R^n$ onto those co-ordinates with indices in $I$, i.e., let $\rho_I$ be the $q \times n$ matrix so that $\rho_I [\xi_1 \, \ldots \, \xi_n]^T = [\xi_{i_1} \, \ldots \, \xi_{i_q}]^T$ for any column matrix $\xi \in \R^n$.

\begin{lemma}
\label{L:deGua}
If $V$ is any $n \times q$ matrix then
\beq \label{E:deGua_eq}
\det(V^T V) = \sum_I \det(V_I^T V_I)
\eeq
and we have the inequalities
\beq \label{E:deGua_ineq}
\max_I \sqrt{\det(V_I^T V_I)} \le \sqrt{\det(V^T V)} \le \sum_I \sqrt{\det(V_I^T V_I)}
\eeq
and
\beq \label{E:deGua_ineq2}
{n \choose q}^{-\frac{1}{2}} \sum_I \sqrt{\det(V_I^T V_I)} \le \sqrt{\det(V^T V)}
\eeq
where $V_I$ is the square matrix $\rho_I V$ and the sums are over all subsets $I \subseteq \{ 1, \ldots, n \}$ with $q$ elements.
\end{lemma}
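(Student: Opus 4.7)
My plan is to recognize the equation (\ref{E:deGua_eq}) as the Cauchy--Binet formula applied to $V^T$ and $V$, so the first step is simply to invoke Cauchy--Binet: since $V^T V$ is $q \times q$ and expressible as a product of a $q \times n$ matrix with an $n \times q$ matrix, the Cauchy--Binet formula yields
\[
\det(V^T V) = \sum_I \det\bigl((V^T)_{\cdot,I}\bigr)\,\det\bigl(V_{I,\cdot}\bigr) = \sum_I \det(V_I)^2 = \sum_I \det(V_I^T V_I),
\]
where the last equality uses that $V_I$ is square. This is the only nontrivial ingredient; the two inequalities will then be straightforward consequences.

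Next I would exploit the key observation that every summand on the right of (\ref{E:deGua_eq}) is nonnegative, since $V_I^T V_I$ is positive semidefinite (equivalently, $\det(V_I^T V_I) = \det(V_I)^2 \ge 0$). Setting $a_I = \det(V_I^T V_I) \ge 0$, the identity from the first step reads $\det(V^T V) = \sum_I a_I$. For the left half of (\ref{E:deGua_ineq}), $\max_I a_I \le \sum_I a_I = \det(V^T V)$, and taking square roots gives $\max_I \sqrt{a_I} \le \sqrt{\det(V^T V)}$. For the right half, use the elementary inequality $\sqrt{\sum_I a_I} \le \sum_I \sqrt{a_I}$ for nonnegative reals (which follows from squaring both sides and noting the cross terms are nonnegative).

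For (\ref{E:deGua_ineq2}), the natural tool is the Cauchy--Schwarz inequality applied to the vectors $(\sqrt{a_I})_I$ and $(1)_I$ in $\R^N$ where $N = \binom{n}{q}$:
\[
\Bigl(\sum_I \sqrt{a_I}\Bigr)^2 \le N \sum_I a_I = \binom{n}{q}\det(V^T V).
\]
Taking square roots and dividing by $\sqrt{\binom{n}{q}}$ gives the desired bound.

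The main (and only real) obstacle is justifying the Cauchy--Binet identity in step one; everything after that is either a trivial nonnegativity argument, the basic inequality $\sqrt{a+b}\le\sqrt a+\sqrt b$, or Cauchy--Schwarz. Since Cauchy--Binet is a classical result, I would simply cite it rather than reproving it, and the entire lemma then reduces to a few lines.
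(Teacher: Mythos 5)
Your proof is correct: the identity (\ref{E:deGua_eq}) is indeed exactly the Gram-matrix case of the Cauchy--Binet formula, and your handling of the three inequalities (nonnegativity of the summands, the elementary bound $\sqrt{\textstyle\sum_I a_I} \le \sum_I \sqrt{a_I}$, and Cauchy--Schwarz against the all-ones vector) is sound and gives precisely the stated bounds. The difference from the paper is in how the key identity is obtained: the paper does not cite Cauchy--Binet but proves the identity from scratch, writing $\det(V^TV)$ as the squared norm of the decomposable element $v_1 \wedge \ldots \wedge v_q$ in the exterior power $\bigwedge^q \R^n$, expanding it in the orthonormal basis $e_{i_1}\wedge\ldots\wedge e_{i_q}$ (whose coefficients are the minors $\det V_I$), and invoking Pythagoras' theorem for an inner product space; the inequalities are then phrased as the comparisons $\|x\|_{\ell^2} \le \|x\|_{\ell^1} \le \sqrt{m}\,\|x\|_{\ell^2}$ for the coefficient vector, which are the same computations you perform with the $a_I$. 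So the two routes are mathematically equivalent --- your citation of Cauchy--Binet is shorter and perfectly acceptable, while the paper's self-contained exterior-algebra argument is chosen deliberately because it exhibits the identity as literally an instance of Pythagoras' theorem in $\bigwedge^q\R^n$, which motivates the paper's framing of Theorem \ref{T:deGua} as a generalisation of the Pythagoras/de Gua theorems.
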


Before proving this lemma, we note that (\ref{E:deGua_eq}) implies (and is essentially equivalent to) the following theorem.  This theorem is a generalization of both Pythagoras' and de Gua's theorems (see \cite[p. 207]{Wells91}, \cite[p. 517]{OsgoodGraustein50} or \cite[p. 21]{Bhatia97}), for when $q = 1$ and $C$ is a line segment then (\ref{E:deGua_simple}) is Pythagoras' theorem, and when $q = n-1$ and $C$ is a $q$-dimensional simplex with vertices on the co-ordinate axes then (\ref{E:deGua_simple}) is de Gua's theorem.

\begin{theorem}
\label{T:deGua}
Let $C$ be a bounded and closed subset of a $q$-dimensional plane in $n$-dimensional Euclidean space $\R^n$.  Then
\beq \label{E:deGua_simple}
\volqeuclsq(C) = \sum_I \volqeuclsq(C_I)
\eeq
where $\volqeuclsq$ is the square of the $q$-dimensional Euclidean volume (i.e., Hausdorff measure), the sum is over all subsets $I \subseteq \{ 1, \ldots, n \}$ with $q$ elements and $C_I = \rho_I(C)$ is essentially the orthogonal projection of $C$ onto the $q$-dimensional plane $\{ \xi \in \R^n \mid \xi_i = 0 \mbox{ if } i \not\in I \}$.
\end{theorem}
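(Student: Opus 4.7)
The plan is to reduce Theorem~\ref{T:deGua} to the purely algebraic identity~(\ref{E:deGua_eq}) of Lemma~\ref{L:deGua} by choosing an affine parameterisation of the $q$-plane containing $C$ and applying the classical formula for the $q$-dimensional volume of a parallelepiped.

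First I would pick a base point $a$ in the $q$-dimensional plane $P \subset \R^n$ containing $C$, and a rank-$q$ matrix $V \in \R^{n \times q}$ whose columns span the direction subspace of $P$, so that the affine map $\psi:\R^q \to P$ defined by $\psi(t) = Vt + a$ is a bijection. Setting $K = \psi^{-1}(C)$, the preimage is a bounded closed subset of $\R^q$, and the parallelepiped-volume formula (equivalently, the area formula applied to the Lipschitz embedding $\psi$) gives
$$
\volqeucl(C) = \sqrt{\det(V^T V)} \cdot \volqeucl(K),
$$
where on the right $\volqeucl(K)$ denotes ordinary Lebesgue measure of $K$ in $\R^q$.

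Next, for each $q$-subset $I \subseteq \{1,\ldots,n\}$, the composite $\rho_I \circ \psi$ is the affine map $t \mapsto V_I t + \rho_I(a)$ with $V_I$ a $q \times q$ matrix; identifying the target $q$-plane $\{\xi \in \R^n \mid \xi_i = 0 \text{ if } i \notin I\}$ with $\R^q$, the same formula yields
$$
\volqeucl(C_I) = |\det V_I| \cdot \volqeucl(K) = \sqrt{\det(V_I^T V_I)} \cdot \volqeucl(K).
$$
Squaring both displays, substituting into identity~(\ref{E:deGua_eq}) applied to $V$, and cancelling the common factor $\volqeucl(K)^2$ then produces equation~(\ref{E:deGua_simple}).

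The only technical point to check is the validity of the parallelepiped volume formula for an arbitrary bounded closed set $C$ rather than just a polytope or smooth region; this follows from the area formula for Lipschitz maps applied to $\psi$, whose Jacobian factor is constantly $\sqrt{\det(V^T V)}$, and the degenerate case $\volqeucl(K) = 0$ is handled automatically since both sides of~(\ref{E:deGua_simple}) then vanish. I therefore do not expect any significant obstacle: the theorem is essentially a geometric repackaging of the algebraic identity~(\ref{E:deGua_eq}) via a linear change of variables.
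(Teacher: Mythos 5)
Your proposal is correct and follows essentially the same route as the paper: parameterise the plane by a full-rank $n \times q$ matrix $V$, use the Gram-determinant (parallelepiped) volume formula to express $\volqeuclsq(C)$ and each $\volqeuclsq(C_I)$ as multiples of the same factor coming from $K$, and then invoke the algebraic identity (\ref{E:deGua_eq}). The only cosmetic differences are that you treat the affine case explicitly via a base point and cancel the common factor (noting the degenerate case), whereas the paper works with the column space of $V$ and simply multiplies both sides of (\ref{E:deGua_eq}) by $\volqeuclsq(K)$.
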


\begin{proof}
Let $C \subseteq \R^n$ be any bounded and closed set contained in the column space $\col V$ of some full-rank $n \times q$ matrix $V$ and let $C_I = \rho_I(C)$, as in the statement.
In general, if $W$ is an $m \times q$ matrix then $\volqeuclsq(W(K)) = \det(W^T W) \volqeuclsq(K)$ for any $K \subseteq \R^q$, where $W(K)$ is the image of $K$ under the linear map $x \mapsto W x$.  This follows from the relationship between Gram determinants and the volumes of parallelepipeds \cite[p. 20]{Bhatia97}.
So choosing $K \subseteq \R^q$ so that $V(K) = C$ we have $\volqeuclsq(C) = \det(V^T V) \volqeuclsq(K)$ and $\volqeuclsq(C_I) = \det(V_I^T V_I) \volqeuclsq(K)$, since $V_I(K) = \rho_I V(K) = \rho_I (C) = C_I$.  Multiplying both sides of (\ref{E:deGua_eq}) by $\volqeuclsq(K)$ therefore proves the theorem.
\end{proof}

We now return to Lemma \ref{L:deGua}.

\begin{proof}[Proof of Lemma \ref{L:deGua}]
See \cite[\S I.5]{Bhatia97} for the basic facts about the exterior algebra of a vector space used in this proof.

If $V$ is any $n \times q$ matrix then let $v_1, \ldots, v_q \in \R^n$ be its columns.
Let $\bigwedge^q \R^n$ be the $q^{th}$ exterior power of $\R^n$ (also known as the $q^{th}$ antisymmetric tensor power of $\R^n$) endowed with the inner product given by
$$\langle a_1 \wedge \ldots \wedge a_q, b_1 \wedge \ldots \wedge b_q \rangle = \det[a_i \cdot b_j]$$
on decomposable elements of $\bigwedge^q \R^n$, where $[a_i \cdot b_j]$ is the matrix with $(i,j)^{th}$ element equal to the Euclidean inner product $a_i \cdot b_j$ of $a_i$ and $b_j$.  Then the corresponding squared norm of $v_1 \wedge \ldots \wedge v_q$ is
\begin{equation}\label{E:norm}
\| v_1 \wedge \ldots \wedge v_q \|^2 = \det[v_i \cdot v_j] = \det (V^T V).
\end{equation}

Now, since $V = [v_1 | \ldots | v_q]$, $v_k = \sum_{j=1}^n v_{jk} e_j$ where $v_{jk}$ is the $(j,k)^{th}$ entry of $V$ and $e_1, \ldots, e_n$ is the standard basis for $\R^n$.  So
\begin{eqnarray}
v_1 \wedge \ldots \wedge v_q
&=& \sum_{j_1, \ldots, j_q} v_{j_1 1} \ldots v_{j_q q} \,  e_{j_1} \wedge \ldots \wedge e_{j_q} \nonumber \\
&=& \sum_{i_1 < \ldots < i_q} \left( \sum_{\sigma \in S_q} \sign(\sigma) v_{\sigma(i_1) 1} \ldots v_{\sigma(i_q) q} \right)  \,  \, e_{i_1} \wedge \ldots \wedge e_{i_q} \mbox{ where $j_k = \sigma(i_k)$}\nonumber \\
&=& \sum_{i_1 < \ldots < i_q} (\det V_I) \,  e_{i_1} \wedge \ldots \wedge e_{i_q}
\label{E:coeff}
\end{eqnarray}
where $I = \{i_1, \ldots, i_q\}$, $S_q$ is the symmetric group on $q$ symbols and $\sign(\sigma)$ is $1$ if the permutation $\sigma \in S_q$ is even and $-1$ if it is odd.

Note that all $e_{i_1} \wedge \ldots \wedge e_{i_q}$ for $1 \le i_1 < \ldots < i_q \le n$ form an orthonormal basis for $\bigwedge^q \R^n$, and (\ref{E:coeff}) gives $v_1 \wedge \ldots \wedge v_q$ in terms of this basis.  But Pythagoras' theorem for a finite-dimensional inner product space says that any vector has a squared norm equal to the sum of the squares of its coefficients with respect to any orthonormal basis.  So by (\ref{E:coeff}) and Pythagoras' theorem for $\bigwedge^q \R^n$ we have
\begin{eqnarray}
\| v_1 \wedge \ldots \wedge v_q \|^2 = \sum_{i_1 < \ldots < i_q} (\det V_I)^2  \label{E:pythagoras}.
\end{eqnarray}
Combining this with (\ref{E:norm}) and $(\det V_I)^2 = \det(V_I^T V_I)$ then gives (\ref{E:deGua_eq}).

The left-hand inequality in (\ref{E:deGua_ineq}) follows from (\ref{E:deGua_eq}) and the fact that $\det(V_I^T V_I) = (\det V_I)^2 \ge 0$. To prove the other inequality, note from (\ref{E:coeff}) and (\ref{E:pythagoras}) that the norm $\| \cdot \|$ is the $\ell^2$ norm on $\bigwedge^q \R^n$ corresponding to the basis $e_{i_1} \wedge \ldots \wedge e_{i_q}$ and the $\ell^1$ norm corresponding to this basis is
$$ \| v_1 \wedge \ldots \wedge v_q \|_{\ell^1} \defeq \sum_{i_1 < \ldots < i_q} | \det V_I |
= \sum_{i_1 < \ldots < i_q} \sqrt{\det(V_I^T V_I)}. $$
Therefore the right-hand inequality in (\ref{E:deGua_ineq}) follows from the fact that the $\ell^2$ norm is always less than or equal to the $\ell^1$ norm (as is trivial to prove for finite dimensional spaces, since if $x \in \R^m$ then $\| x \|_{\ell^1}^2 = (\sum_i |x_i|)^2 \ge \sum_i |x_i|^2 = \| x \|^2_{\ell^2}$) and the inequality (\ref{E:deGua_ineq2}) follows from the fact that $\| x \|_{\ell^1} \le  \sqrt{m} \, \| x \|_{\ell^2}$ if $x \in \R^m$ (which can be easily proved with the Cauchy-Schwarz inequality).
\end{proof}

Now, since the branch of $\arcsin$ in (\ref{E:phi}) has domain $(-1,1)$ and range $(-\frac{\pi}{2},\frac{\pi}{2})$,
$$ \frac{\partial \phi_i}{\partial \beta_j}
= \frac{1}{\sqrt{1-\tanh^2 \frac{x_i \beta}{2}}} \left( \frac{1}{\cosh^2 \frac{x_i \beta}{2}} \right) \frac{x_{ij}}{2}
= \frac{x_{ij}}{2 \cosh \frac{x_i \beta}{2}}.$$
Therefore, the Jacobian matrix $J(\beta)$ of $\phi$ at $\beta$ is
\begin{equation}\label{E:Jbeta}
J(\beta) = M(\beta) X
\end{equation}
where $M(\beta)$ is the $n \times n$ diagonal matrix with $i^{th}$ diagonal element $(2 \cosh \frac{x_i \beta}{2})^{-1}$.  As a check on this formula, it is easy to see that substituting (\ref{E:Jbeta}) into Lemma \ref{L:pullback} and using Lemma \ref{L:eucl_cube} gives the same result as Lemma \ref{L:FImatrix_logreg}.

For any $I \subseteq \{ 1, \ldots, n \}$ with $q$ elements, let $X_I = \rho_I X$ be the square matrix obtained from $X$ by deleting all rows of $X$ except those with indices in $I$ and let $\phi_I = \rho_I \phi$ be the projection of $\phi$ onto co-ordinates $i_1, \ldots, i_q$.  We say $\phi_I$ is a local diffeomorphism if it is smooth (infinitely differentiable) and the determinant of its Jacobian matrix is nowhere zero.

\begin{lemma}
\label{L:monotonic}
For any $I \subseteq \{ 1, \ldots, n \}$ with $q$ elements, $\phi_I: \R^q \to (-\frac{\pi}{2},\frac{\pi}{2})^q$ is either injective and a local diffeomorphism or else there is some non-zero $v \in \R^q$ so that $X_I v = 0$ and $\phi_I$ is constant in the direction of $v$, i.e. $\phi_I(\beta + tv) = \phi_I(\beta)$ for all $\beta \in \R^q$ and $t \in \R$.
Since $X$ has full rank, this implies $\phi$ is injective.
\end{lemma}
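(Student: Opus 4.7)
The plan is to factor $\phi_I$ through the linear map determined by $X_I$, then exploit the fact that the scalar function $f(t) \defeq \arcsin(\tanh(t/2))$ is a strictly increasing diffeomorphism from $\R$ onto $(-\frac{\pi}{2},\frac{\pi}{2})$. Indeed, $f'(t) = \frac{1}{2}\,\mathrm{sech}(t/2) > 0$ (as already essentially computed in the derivation of (\ref{E:Jbeta})), and $\lim_{t\to\pm\infty} f(t) = \pm \pi/2$, so $f$ is a $C^\infty$ bijection $\R \to (-\frac{\pi}{2},\frac{\pi}{2})$ with nowhere-vanishing derivative. Define $F:\R^q \to (-\frac{\pi}{2},\frac{\pi}{2})^q$ by applying $f$ coordinatewise; then $F$ is a global diffeomorphism, and by (\ref{E:phi}) we have the factorisation
\beq
\phi_I(\beta) \;=\; F(X_I \beta),
\eeq
i.e.\ $\phi_I = F \circ L_I$ where $L_I: \R^q \to \R^q$ is the linear map $\beta \mapsto X_I \beta$.

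From this factorisation the dichotomy is immediate. If $X_I$ is invertible, then $L_I$ is a linear isomorphism and $\phi_I = F \circ L_I$ is a composition of diffeomorphisms, hence injective and a local diffeomorphism. If $X_I$ is singular, pick any nonzero $v \in \ker X_I$; then for every $\beta \in \R^q$ and $t \in \R$,
\beq
\phi_I(\beta + tv) \;=\; F(X_I \beta + t X_I v) \;=\; F(X_I \beta) \;=\; \phi_I(\beta),
\eeq
so $\phi_I$ is constant along the line through $\beta$ in direction $v$, as claimed.

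For the final assertion that $\phi$ is injective, we use the full-rank hypothesis on $X$: since $X$ has rank $q$, by choosing $q$ linearly independent rows we obtain some index set $I$ for which $X_I$ is invertible. By the case just established, $\phi_I$ is then injective. But $\phi_I = \rho_I \circ \phi$, so if $\phi(\beta) = \phi(\beta')$ then $\phi_I(\beta) = \phi_I(\beta')$ and hence $\beta = \beta'$, proving injectivity of $\phi$.

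No step here looks delicate; the only thing to verify carefully is the strict monotonicity and surjectivity of the scalar function $f$, which is a one-line computation. The conceptual content is simply recognising that the nonlinearity in $\phi_I$ is applied componentwise after a linear map, so the injectivity question reduces entirely to the linear-algebraic question of whether $X_I$ has trivial kernel.
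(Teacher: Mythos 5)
Your proof is correct, and it takes a cleaner route than the paper does on the key injectivity step. The paper works infinitesimally: it writes the Jacobian as $J_I(\beta) = M_I(\beta) X_I$ with $M_I(\beta)$ diagonal and positive definite, handles the degenerate case by noting $J_I(\beta)v = 0$ forces constancy along $v$, and proves injectivity when $\det X_I \neq 0$ by a path argument --- setting $\gamma(t) = \phi_I(t\alpha + (1-t)\beta)$, $w = X_I(\alpha-\beta)$, and integrating $w^T \dot{\gamma}(t) = w^T M_I(\cdot)\, w > 0$ over $[0,1]$ to conclude $w^T\bigl(\phi_I(\alpha)-\phi_I(\beta)\bigr) > 0$. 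You instead exploit the global factorisation $\phi_I = F \circ L_I$, where $F$ applies the scalar diffeomorphism $f(t) = \arcsin(\tanh(t/2))$ coordinatewise; then both halves of the dichotomy are immediate (injectivity and local diffeomorphism from composing bijections with nonvanishing derivatives; constancy along $\ker X_I$ from $X_I(\beta + tv) = X_I\beta$), with no integration needed. What your approach buys is brevity and transparency, since the nonlinearity genuinely decouples coordinatewise here; what the paper's monotone-map argument buys is generality --- the positive-definiteness argument would survive if $M_I(\beta)$ were merely positive definite rather than diagonal, i.e. if the map did not factor through a coordinatewise function --- and it reuses machinery (the Jacobian identity $J_I = M_I X_I$) that the paper needs anyway in Theorems \ref{T:vol_bounds} and \ref{T:vol_bounds_generic}. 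Your final step deducing injectivity of $\phi$ from injectivity of $\phi_I = \rho_I \circ \phi$ for a full-rank choice of $I$ coincides with the paper's.
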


When $q=1$, this lemma says that each $\phi_i$ is either constant or strictly monotonic.

\begin{proof}[Proof of Lemma \ref{L:monotonic}]  Let $M_I(\beta) \defeq \rho_I M(\beta) \rho_I^T$ be the matrix obtained from $M(\beta)$ by deleting all rows and columns except those with indices in $I$ and let $J_I(\beta)$ be the Jacobian matrix of $\phi_I$ at $\beta \in \R^q$.  Since $\rho_I$ is linear and constant in $\beta$, $J_I(\beta) = \rho_I J(\beta)$, so by (\ref{E:Jbeta}) we have
\begin{equation}\label{E:Jphi}
J_I(\beta)= \rho_I J(\beta) = \rho_I M(\beta) X = \rho_I M(\beta) \rho_I^T \rho_I X = M_I(\beta) X_I
\end{equation}
where $\rho_I M(\beta) = \rho_I M(\beta) \rho_I^T \rho_I$ holds because $M(\beta)$ is diagonal.

We now consider two cases for $\det X_I$.  If $\det X_I = 0$ then there exists some $v \in \R^q$ so that $X_I v = 0$.  So by (\ref{E:Jphi}), $J_I(\beta) v = 0$ for all $\beta$, i.e. for any $i \in I$, the derivative $v \cdot \nabla \phi_i$ of $\phi_i$ in the direction of $v$ is zero for all $\beta$.  So each $\phi_i$ is constant in the direction of $v$, hence $\phi_I(\beta + tv) = \phi_I(\beta)$ for all $\beta$ and $t \in \R$.

If $\det X_I \not= 0$ then by (\ref{E:Jphi}) and the fact that $\det M_I(\beta) > 0$ everywhere, $\det J_I(\beta) \not= 0$ for all $\beta$, so $\phi_I$ is a local diffeomorphism.  To show that $\phi_I$ is injective, let $\alpha, \beta \in \R^q$ with $\alpha \not= \beta$ be given, and we will show that $\phi_I(\alpha) \not= \phi_I(\beta)$.  Define $\gamma:\R \to \R^q$ by $\gamma(t) = \phi_I(t \alpha + (1-t) \beta)$ for any $t \in \R$ and let $\dot{\gamma}$ be the velocity of this path.  Let $w = X_I (\alpha - \beta)$ and note that this is non-zero since $\det X_I \not= 0$ by assumption.
Writing $J_I(t \alpha + (1-t) \beta)$ for $J_I$ evaluated at $t \alpha + (1-t) \beta$, and similarly for $M_I$, by the chain rule we have
$$\dot{\gamma} = J_I(t \alpha + (1-t) \beta) (\alpha - \beta) = M_I(t \alpha + (1-t) \beta) X_I(\alpha - \beta)
= M_I(t \alpha + (1-t) \beta) w$$
so $w^T \dot{\gamma}(t) = w^T M_I(t \alpha + (1-t) \beta) w > 0$ since $M_I$ is positive definite everywhere.  But
$$w^T (\phi_I(\alpha) - \phi_I(\beta)) = w^T \int_0^1 \dot{\gamma}(t) \, dt
= \int_0^1 w^T M_I(t \alpha + (1-t) \beta) w \, dt > 0 $$
so $\phi_I(\alpha) \not= \phi_I(\beta)$, and hence $\phi_I$ is injective.

Now, since $X$ is full-rank, there exists some $I$ with $\det X_I \not=0$.  Therefore the results just proved show that $\phi_I$ and hence $\phi$ is injective.
\end{proof}

By Lemma \ref{L:pullback}, $\phi$ is a local isometry onto its image.  This does not, in itself, imply that $\vol(\SX)$ is the volume of the image of $\phi$ (e.g., consider a function which winds a line around a circle).  However, as a consequence of the injectivity of $\phi$ just proven, we have the following.

\begin{lemma}
\label{L:volembed}
$\vol(\SX)$ is the $q$-dimensional Euclidean volume (i.e., Hausdorff measure) of the subset $\phi(\R^q)$ inside the Euclidean cube $\Xi$ of side-length $\pi$.
\end{lemma}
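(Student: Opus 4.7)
The plan is to combine Lemma \ref{L:pullback}, Lemma \ref{L:eucl_cube} and the injectivity of $\phi$ established in Lemma \ref{L:monotonic} with the classical area formula from geometric measure theory. The definition (\ref{E:vol_defn}) of $\vol(\SX)$ is the integral over $\R^q$ of the volume density determined by the Fisher information metric, so what needs to be shown is that this integral equals the $q$-dimensional Hausdorff measure of the embedded image $\phi(\R^q)$.

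First I would recast the Fisher information volume density in terms of the Jacobian of $\phi$. Applying Lemma \ref{L:pullback} with $U = \R^q$ the natural parameter space of $\SX$, $V = \Xi$ the Euclidean parameter space of the saturated model, and the map $\phi$ of (\ref{E:phi})---whose compatibility $\ell_U = \ell_V \circ \phi$ with the log-likelihoods follows from (\ref{E:Eucl_logodds}) and (\ref{E:logreg_submodel})---and using Lemma \ref{L:eucl_cube} to set $g_V = I$, one obtains $g_U(\beta) = J(\beta)^T J(\beta)$. In particular the integrand in (\ref{E:vol_defn}) coincides with $\sqrt{\det(J(\beta)^T J(\beta))}$, which is also consistent with the check noted immediately after (\ref{E:Jbeta}).

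Next I would verify that $\phi$ is a smooth embedding into $\Xi$. Smoothness is immediate from (\ref{E:phi}). The factorisation $J(\beta) = M(\beta) X$ in (\ref{E:Jbeta}), combined with the invertibility of the diagonal matrix $M(\beta)$ and the assumption that $X$ has rank $q$, shows that $J(\beta)$ has rank $q$ at every $\beta$, so $\phi$ is an immersion. Injectivity is the last assertion of Lemma \ref{L:monotonic}, and the inclusion $\phi(\R^q) \subseteq \Xi$ is built into the range $(-\pi/2,\pi/2)$ of the arcsin used in (\ref{E:phi}).

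Finally, I would invoke the area formula: for a smooth injective immersion $\phi: \R^q \to \R^n$, the image has $q$-dimensional Hausdorff measure
\[\mathcal{H}^q(\phi(\R^q)) \;=\; \int_{\R^q} \sqrt{\det(J(\beta)^T J(\beta))} \, d\beta.\]
Combining with the first step and (\ref{E:vol_defn}) gives $\mathcal{H}^q(\phi(\R^q)) = \vol(\SX)$, as required. The only step that is not essentially a bookkeeping consequence of earlier lemmas is the area formula itself; this is a standard fact of geometric measure theory and is the main point to cite. If one wanted a self-contained argument, it could be obtained by a partition-of-unity argument reducing, on small neighbourhoods where some $\phi_I$ with $\det X_I \neq 0$ is a diffeomorphism onto its image (Lemma \ref{L:monotonic}), to the ordinary change-of-variables formula applied to the projection $\rho_I \circ \phi$ followed by the graphing relation expressing $\phi$ over $\phi_I$.
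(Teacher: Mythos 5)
Your proposal is correct and follows essentially the same route as the paper's proof: express the Fisher information volume density as $\sqrt{\det(J(\beta)^T J(\beta))}$ via Lemmas \ref{L:pullback} and \ref{L:eucl_cube} and equation (\ref{E:Jbeta}), then use the injectivity from Lemma \ref{L:monotonic} to identify the integral with the Euclidean (Hausdorff) volume of $\phi(\R^q)$. The only difference is that you make explicit the appeal to the area formula (and the immersion property of $\phi$), which the paper leaves implicit in its citation of Lemmas \ref{L:pullback} and \ref{L:monotonic}.
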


\begin{proof}
By Lemma \ref{L:pullback}, $\phi$ is a local isometry onto its image, so if $J(\beta)$ is the Jacobian of $\phi$ at $\beta$ (as above) then
\begin{eqnarray*}
\vol(\SX) &=&  \int_{\R^q} \sqrt{\det(X^T D_{X \beta} \, X)} \, d\beta \mbox{ by definition}\\
&=&  \int_{\R^q} \sqrt{\det(J(\beta)^T J(\beta))} \, d\beta \mbox{ by (\ref{E:Jbeta})} \\
&=&  \int_{\phi(\R^q)} \sqrt{\det(g_\Xi)} \, d\xi \mbox{ by Lemmas \ref{L:pullback} and  \ref{L:monotonic}} \\
&=&  \volqeucl(\phi(\R^q)) \mbox{ by definition}
\end{eqnarray*}
where $g_\Xi = I$ is the Euclidean metric on $\Xi$ and $\volqeucl(\phi(\R^q))$ is the $q$-dimensional Euclidean volume (i.e., $q$-dimensional Hausdorff measure) of $\phi(\R^q) \subseteq \Xi$.
\end{proof}

We are now ready to prove our main volume bounds.  For $c,l \in \R^n$, define $\mbox{Box}(c,l) \defeq \{ \xi \in \R^n \mid |\xi_i - c_i| < \frac{1}{2}l_i \}$.  For a Borel-measurable set $U \subseteq \R^q$, let
$$ \vol(\SX |U) \defeq \int_U \sqrt{\det(X^T D_{X \beta} \, X)} \, d\beta$$
be the contribution of volume from $U$ to $\vol(\SX)$.

\begin{theorem}
\label{T:vol_bounds}
Let $U \subseteq \R^q$ be a Borel measurable set.  If $\phi(U) \subseteq \mbox{Box}(c,l)$ for some $c,l \in \R^n$ then
$$ \vol(\SX |U) \le \sum_I\prod_{i \in I}l_i$$
where the sum is over all subsets $I \subseteq \{ 1, \ldots, n \}$ with $q$ elements.  If there exists some $c,l \in \R^n$ (possibly different from those above) and some $I$ so that $\phi_I(U) \supseteq \rho_I(\mbox{Box}(c,l))$ then
$$ \vol(\SX |U) \ge \prod_{i \in I} l_i.$$
\end{theorem}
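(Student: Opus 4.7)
The plan is to pull $\vol(\SX | U)$ back through $\phi$ into the Euclidean cube $\Xi$, then control the $q$-dimensional Hausdorff measure of the image $\phi(U) \subseteq \R^n$ by its coordinate projections $\phi_I(U)$, using the generalized Pythagoras inequalities of Lemma \ref{L:deGua}. By (\ref{E:Jbeta}) and the identity $J(\beta)^T J(\beta) = X^T M(\beta)^2 X = X^T D_{X\beta} X$, the integrand of $\vol(\SX | U)$ equals $\sqrt{\det(J(\beta)^T J(\beta))}$. Applied to the matrix $V = J(\beta)$, Lemma \ref{L:deGua} gives, at each $\beta$,
\begin{equation*}
|\det J_I(\beta)| \;\le\; \sqrt{\det(J(\beta)^T J(\beta))} \;\le\; \sum_I |\det J_I(\beta)|,
\end{equation*}
where $J_I(\beta)$ is the Jacobian of $\phi_I = \rho_I \phi$.

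For the upper bound, I would integrate the right-hand inequality over $U$ to obtain $\vol(\SX | U) \le \sum_I \int_U |\det J_I(\beta)| \, d\beta$. For each $I$, either $\det X_I = 0$, in which case (\ref{E:Jphi}) gives $\det J_I \equiv 0$ and the term vanishes, or $\det X_I \neq 0$, in which case Lemma \ref{L:monotonic} ensures $\phi_I$ is an injective local diffeomorphism, so the change-of-variables formula gives $\int_U |\det J_I(\beta)| \, d\beta = \volqeucl(\phi_I(U))$. Since $\phi(U) \subseteq \mbox{Box}(c,l)$ implies $\phi_I(U) = \rho_I(\phi(U)) \subseteq \rho_I(\mbox{Box}(c,l))$, which is a $q$-dimensional box of side lengths $\{l_i : i \in I\}$, its Euclidean volume is at most $\prod_{i \in I} l_i$, yielding the upper bound.

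For the lower bound, fix the $I$ for which $\phi_I(U) \supseteq \rho_I(\mbox{Box}(c,l))$. The containment forces $\phi_I(U)$ to have positive $q$-dimensional Lebesgue measure (the non-trivial case $l_i > 0$ for all $i \in I$), so $\phi_I$ cannot collapse directions: by Lemma \ref{L:monotonic}, $\det X_I \neq 0$ and $\phi_I$ is injective. The left-hand inequality above, integrated over $U$ and combined with the change-of-variables formula, gives
\begin{equation*}
\vol(\SX | U) \;\ge\; \int_U |\det J_I(\beta)| \, d\beta \;=\; \volqeucl(\phi_I(U)) \;\ge\; \volqeucl(\rho_I(\mbox{Box}(c,l))) \;=\; \prod_{i \in I} l_i,
\end{equation*}
as required.

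The only delicate step is the change-of-variables identity $\int_U |\det J_I(\beta)|\, d\beta = \volqeucl(\phi_I(U))$: it relies on $\phi_I$ being a globally injective local diffeomorphism from $\R^q$ onto its image in $(-\tfrac{\pi}{2},\tfrac{\pi}{2})^q$, which is exactly what Lemma \ref{L:monotonic} supplies when $\det X_I \neq 0$. With that in hand, the two bounds follow cleanly from the two halves of (\ref{E:deGua_ineq}), so the main conceptual content of the theorem is really the Pythagorean decomposition of Lemma \ref{L:deGua} translated into a statement about projections of $\phi(U)$ onto coordinate $q$-planes of $\Xi$.
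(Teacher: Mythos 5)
Your proposal is correct and follows essentially the same route as the paper's proof: reduce the integrand to $\sqrt{\det(J(\beta)^T J(\beta))}$ via (\ref{E:Jbeta}), apply the two halves of (\ref{E:deGua_ineq}) with $V = J(\beta)$, and convert $\int_U \sqrt{\det(J_I^T J_I)}\,d\beta$ into $\volqeucl(\phi_I(U))$ using the injectivity/local-diffeomorphism dichotomy of Lemma \ref{L:monotonic}, then compare with the projected box. Your extra care about the $\det X_I = 0$ terms and the change-of-variables step simply makes explicit what the paper cites Lemma \ref{L:monotonic} for.
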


\begin{proof}
Let $\phi: \R^q \to \Xi$ be as in (\ref{E:phi}) and let $J(\beta)$ be the Jacobian matrix of $\phi$.  As in the proof of Lemma \ref{L:monotonic}, since $\rho_I$ is linear and constant in $\beta$, $J_I(\beta) \defeq \rho_I J(\beta)$ is the Jacobian matrix of $\phi_I(\beta) \defeq \rho_I \phi(\beta)$.
To establish the upper bound on $\vol(\SX|U)$, we have
\begin{eqnarray*}
\vol(\SX|U) &=&  \int_{U} \sqrt{\det(X^T D_{X \beta} \, X)} \, d\beta \mbox{ by definition}\\
&=&  \int_{U} \sqrt{\det(J(\beta)^T J(\beta))} \, d\beta \mbox{ by (\ref{E:Jbeta})} \\
&\le&  \sum_I \int_{U} \sqrt{ \det(J_I(\beta)^T J_I(\beta))} \, d\beta \mbox{ by (\ref{E:deGua_ineq}) with $V = J(\beta)$} \\
&=&  \sum_I \volqeucl(\phi_I(U)) \mbox{ by Lemma \ref{L:monotonic}} \\
&\le&  \sum_I \volqeucl(\rho_I(\mbox{Box}(c,l))) \mbox{ if $\phi(U) \subseteq \mbox{Box}(c,l)$.} \\
&=&  \sum_I \prod_{i \in I}l_i.
\end{eqnarray*}
For the lower bound, if $I$ is such that $\phi_I(U) \supseteq \rho_I(\mbox{Box}(c,l))$ then
\begin{eqnarray*}
\vol(\SX|U) &=&  \int_{U} \sqrt{\det(J(\beta)^T J(\beta))} \, d\beta \mbox{ by (\ref{E:Jbeta}), as above} \\
&\ge&  \int_{U} \sqrt{ \det(J_I(\beta)^T J_I(\beta))} \, d\beta \mbox{ by (\ref{E:deGua_ineq}) with $V = J(\beta)$} \\
&=&  \volqeucl(\phi_I(U)) \mbox{ by Lemma \ref{L:monotonic}} \\
&\ge&  \prod_{i \in I}l_i \mbox{ by the above assumption that $\phi_I(U) \supseteq \rho_I(\mbox{Box}(c,l))$.}
\end{eqnarray*}
\end{proof}

We can now prove Theorem \ref{T:volSXbound}, which states that $ \pi^q \le \vol(\SX) \le {n \choose q} \pi^q.$

\begin{proof}[Proof of Theorem \ref{T:volSXbound}.]
For the upper bound, apply Theorem \ref{T:vol_bounds} with $U=\R^q$, $c_i = 0$ and $l_i = \pi$.

For the lower bound, since $X$ is full-rank, there is some $I$ so that $X_I$ is non-singular.  But then $X_I$ is a design matrix for the saturated model for $q$ binary observations.  Therefore the image of $\phi_{X_I} = \phi_I = \rho_I \phi$ is the cube $(-\pi/2,\pi/2)^q$, since the saturated model is unique up to reparameterisation and it obviously has this image under $\phi_{X_I}$ if $X_I$ is the identity.  So if $U=\R^q$, $c_i = 0$ and $l_i = \pi$ (as above) then $\phi_I(U) \supseteq \rho_I(\mbox{Box}(c,l))$, so applying Theorem \ref{T:vol_bounds} completes the proof.
\end{proof}

Note that the bounds of Theorem \ref{T:volSXbound} are sharp, at least when $q=1$, since the lower bound is realised by $X = [1 \,\,\, 0 \, \ldots \, 0]^T$ and the upper bound is approached by $X = [t \,\,\, t^2 \, \ldots \, t^n]^T$ as $t \to 0$ (consider the image of $\phi$ and use Theorem \ref{T:vol_bounds}).

We now have the following refinement of Theorem \ref{T:vol_bounds}, which shows that the lower bound of Theorem \ref{T:vol_bounds} is only realised by highly degenerate design matrices.

\begin{theorem}
\label{T:vol_bounds_generic}
If $X$ is any $n \times q$ matrix then
$$ \frac{N_1 \pi^q}{\sqrt{{n \choose q}}} \le \vol(\SX) \le N_1 \pi^q$$
where $N_1$ is the number of subsets $I \subseteq \{ 1, \ldots, n\}$ with exactly $q$ elements for which $\det X_I \not= 0$, and $X_I = \rho_I X$.  In particular, if $X$ is generic then $N_1 = {n \choose q}$ so
$$ \vol(\SX) \ge \pi^q \sqrt{{n \choose q}}.$$
\end{theorem}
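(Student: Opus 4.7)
The plan is to apply both inequalities of Lemma \ref{L:deGua} to the matrix $V = J(\beta) = M(\beta) X$, in the same spirit as the proof of Theorem \ref{T:vol_bounds}, but this time extracting the exact number of nondegenerate index sets rather than crudely bounding the sum. The key observation is that $J_I(\beta) = M_I(\beta) X_I$ by (\ref{E:Jphi}), and since $M_I(\beta)$ is diagonal with strictly positive entries, $\det J_I(\beta) \ne 0$ for all $\beta$ if and only if $\det X_I \ne 0$. In particular, $\sqrt{\det(J_I^T J_I)} \equiv 0$ precisely on the ${n \choose q} - N_1$ singular index sets, so those terms drop out of any sum over $I$.

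For the upper bound, I would start from the calculation
\begin{equation*}
\vol(\SX) = \int_{\R^q} \sqrt{\det(J(\beta)^T J(\beta))} \, d\beta
\le \sum_I \int_{\R^q} \sqrt{\det(J_I(\beta)^T J_I(\beta))} \, d\beta
\end{equation*}
using the right-hand inequality of (\ref{E:deGua_ineq}). For each $I$ with $\det X_I \ne 0$, Lemma \ref{L:monotonic} says $\phi_I$ is an injective local diffeomorphism, so the change of variables formula gives
\begin{equation*}
\int_{\R^q} \sqrt{\det(J_I^T J_I)} \, d\beta = \volqeucl(\phi_I(\R^q)) \le \pi^q,
\end{equation*}
since $\phi_I(\R^q) \subseteq (-\pi/2, \pi/2)^q$. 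For each $I$ with $\det X_I = 0$ the integrand vanishes identically. Summing over the $N_1$ surviving terms yields $\vol(\SX) \le N_1 \pi^q$.

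For the lower bound, I would apply the sharper inequality (\ref{E:deGua_ineq2}) with $V = J(\beta)$ to get
\begin{equation*}
\sqrt{\det(J^T J)} \ge {n \choose q}^{-1/2} \sum_I \sqrt{\det(J_I^T J_I)},
\end{equation*}
then integrate both sides over $\R^q$. For each nondegenerate $I$, I would again identify the integral of $\sqrt{\det(J_I^T J_I)}$ with $\volqeucl(\phi_I(\R^q))$, but this time I need a matching \emph{lower} bound on the image. Here I would invoke the surjectivity argument used at the end of the proof of Theorem \ref{T:volSXbound}: when $X_I$ is invertible, $\phi_{X_I}$ is (by uniqueness of reparameterisations of the saturated model for $q$ binary observations) a diffeomorphism onto the entire cube $(-\pi/2, \pi/2)^q$, so $\volqeucl(\phi_I(\R^q)) = \pi^q$. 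This gives
\begin{equation*}
\vol(\SX) \ge {n \choose q}^{-1/2} \cdot N_1 \pi^q,
\end{equation*}
which is the stated lower bound. The ``generic'' case then follows immediately because, by definition, every $q \times q$ submatrix $X_I$ has nonzero determinant, so $N_1 = {n \choose q}$.

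The only subtlety, and the piece I want to be careful about, is justifying both the change of variables equality $\int_{\R^q} \sqrt{\det(J_I^T J_I)}\, d\beta = \volqeucl(\phi_I(\R^q))$ and the surjectivity $\phi_I(\R^q) = (-\pi/2, \pi/2)^q$; both rest on Lemma \ref{L:monotonic} together with the remark in Theorem \ref{T:volSXbound}'s proof identifying $\phi_{X_I}$ with a saturated-model reparameterisation when $\det X_I \ne 0$. Everything else is a direct rerun of the arguments already in place for Theorems \ref{T:vol_bounds} and \ref{T:volSXbound}, with the more refined inequality (\ref{E:deGua_ineq2}) replacing the crude step in the earlier lower bound.
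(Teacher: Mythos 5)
Your proposal is correct and follows essentially the same route as the paper: both bounds come from applying (\ref{E:deGua_ineq}) and (\ref{E:deGua_ineq2}) with $V = J(\beta)$, identifying $\int_{\R^q}\sqrt{\det(J_I^T J_I)}\,d\beta$ with $\volqeucl(\phi_I(\R^q))$ via Lemma \ref{L:monotonic}, and evaluating it as $0$ or $\pi^q$ according to whether $\det X_I$ vanishes, exactly as in the paper's proof. The saturated-model identification you flag as the delicate point is precisely the justification the paper itself uses.
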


\begin{proof}
Let $\phi: \R^q \to \Xi$ be as in (\ref{E:phi}) and let $J(\beta)$ be the Jacobian matrix of $\phi$.  As in the proof of Theorem \ref{T:vol_bounds}, $J_I(\beta) \defeq \rho_I J(\beta)$ is the Jacobian matrix of $\phi_I(\beta) \defeq \rho_I \phi(\beta)$.
To establish the lower bound on $\vol(\SX)$, we have
\begin{eqnarray*}
\vol(\SX) &=&  \int_{\R^q} \sqrt{\det(X^T D_{X \beta} \, X)} \, d\beta \mbox{ by definition}\\
&=&  \int_{\R^q} \sqrt{\det(J(\beta)^T J(\beta))} \, d\beta \mbox{ by (\ref{E:Jbeta})} \\
&\ge&  {n \choose q}^{-\frac{1}{2}} \sum_I \int_{\R^q} \sqrt{ \det(J_I(\beta)^T J_I(\beta))} \, d\beta \mbox{ by (\ref{E:deGua_ineq2}) with $V = J(\beta)$} \\
&=&  \frac{N_1 \pi^q}{\sqrt{{n \choose q}}}
\end{eqnarray*}
since the integral $\int_{\R^q} \sqrt{ \det(J_I(\beta)^T J_I(\beta))} \, d\beta$ is $0$ if $\det X_I = 0$, by (\ref{E:Jbeta}), and is $\pi^q$ if $\det X_I \not= 0$, since then $X_I$ is a design matrix for the saturated model for $q$ binary observations and the integral is its volume.

The upper bound is proved similarly, though based on (\ref{E:deGua_ineq}) rather than (\ref{E:deGua_ineq2}).
\end{proof}

\section{$\vol(\SX)$ is a discontinuous function of $X$}
\label{S:discont}

Let $X$ be the full-rank, $q \times n$ design matrix of a logistic regression model $\SX$ and let $\phi: \R^q \to \Xi$ be the isometric embedding of $\SX$ into the Euclidean cube $\Xi$ given by (\ref{E:phi}).  In this section, we will show that $\vol(\SX)$ is a discontinuous function of $X$ (though we will see in Theorem \ref{T:volcont} that $\vol(\SX)$ is continuous at generic $X$, and in Section \ref{S:nongeneric} we will explicitly describe the discontinuities at non-generic $X$).  This makes it unlikely that any closed-form expression for the volume exists in general, but it has interesting consequences when $\vol(\SX)$ is interpreted as a measure of model complexity (see Section \ref{S:compl}).

When $q=n$, there is only one logistic regression model up to reparameterisation, so $\vol(\SX)$ is trivially continuous in this case.  But in all other cases we have the following.

\begin{lemma}
\label{L:discontvol}
$\vol(\SX)$ is a discontinuous function of $X$ for all $q$ and $n$ with $q < n$.
\end{lemma}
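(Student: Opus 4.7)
The plan is to exhibit a full-rank design matrix $X_0$ at which $\vol(\SX)$ jumps up along some sequence $X_t \to X_0$. Take $X_0$ to be the $n \times q$ matrix whose top $q$ rows form the identity and whose bottom $n-q$ rows are all zero, so $X_0$ has full rank $q$. From (\ref{E:phi}), its components satisfy $(\phi_{X_0})_i(\beta) = \arcsin(\tanh(\beta_i/2))$ for $i \le q$ and $(\phi_{X_0})_i(\beta) = 0$ for $i > q$, hence $\phi_{X_0}(\R^q) = (-\pi/2, \pi/2)^q \times \{0\}^{n-q}$. By Lemma \ref{L:volembed}, $\vol(\mathcal{S}_{X_0}) = \pi^q$.

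Next I would perturb $X_0$ to a generic matrix and invoke Theorem \ref{T:vol_bounds_generic}. Set $X_t = X_0 + tB$ for a fixed $n \times q$ matrix $B$ and $t > 0$. For any $I \subseteq \{1, \ldots, n\}$ of size $q$, the determinant $\det \rho_I X_t$ is a polynomial in $t$; a cofactor expansion along the rows of $I$ inherited from the identity block of $X_0$ shows its lowest-order term is $\pm t^m$ times an $m \times m$ minor of the bottom $n-q$ rows of $B$, where $m = |I \cap \{q+1, \ldots, n\}|$, with $m = 0$ yielding the constant term $1$. For generic $B$, every one of these finitely many minors is nonzero, so $X_t$ is generic in the sense of Theorem \ref{T:vol_bounds_generic} for all sufficiently small $t > 0$. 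That theorem then yields
$$ \vol(\mathcal{S}_{X_t}) \;\ge\; \pi^q \sqrt{\binom{n}{q}} \;\ge\; \pi^q \sqrt{2} \;>\; \pi^q = \vol(\mathcal{S}_{X_0}), $$
the second inequality holding because $q < n$ forces $\binom{n}{q} \ge 2$. Since $X_t \to X_0$, this proves that $\vol(\SX)$ is discontinuous at $X_0$.

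The only nontrivial step is the genericity claim for $X_t$, which reduces to finitely many polynomial nonvanishing conditions on the bottom $(n-q) \times q$ block of $B$; the set of permissible $B$ is therefore Zariski-open and dense, so such a $B$ exists. Everything else is routine: the image computation for $X_0$ uses only the product structure of $\phi_{X_0}$, and the volume jump is furnished directly by Theorem \ref{T:vol_bounds_generic}. One could alternatively cite (for $q = 1$) the example at the end of Section \ref{S:finite} showing that $\vol(\mathcal{S}_{[t,t^2,\ldots,t^n]^T})$ approaches $n\pi$ as $t \to 0$, and rescale to obtain a full-rank limit, but the generic-perturbation argument above works uniformly in $q$ and $n$.
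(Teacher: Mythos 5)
Your proposal is correct and follows essentially the same route as the paper: it uses the same matrix $X_0 = [I_q \; 0]^T$ with $\vol(\mathcal{S}_{X_0}) = \pi^q$ and invokes Theorem \ref{T:vol_bounds_generic} for nearby generic matrices, whose existence the paper simply asserts (generic matrices being dense) while you verify it with an explicit perturbation argument. The extra detail is sound but not a different method.
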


\begin{proof}
Let $X$ be the $n \times q$ matrix $X = [I_q \; 0]^T$ consisting of the $q\times q$ identity matrix $I_q$ followed by $n-q$ rows of zeroes.  Then $\vol(\SX) = \pi^q$, but there are generic design matrices $Z$ arbitrarily close to $X$, and these satisfy $\vol(\SZ) \ge \pi^q \sqrt{{n \choose q}}$ by Theorem \ref{T:vol_bounds_generic}.
\end{proof}

We can illustrate how this discontinuity arises as follows (see Figure \ref{F:worms}).  Let $q=1$ and $n =2$, so $X$ is a column matrix with entries $x_1$ and $x_2$, then fix $x_2 = 1$ and consider the limit $x_1 \to 0$.  When $x_1 = 0$, $\phi_1(\beta) = 0$ and $\phi_2(\beta)$ ranges between $-\pi/2$ and $\pi/2$, so  $\vol(\SX) = \pi$ by Lemma \ref{L:volembed}.  But when $x_1 > 0$ then $\phi(\beta) \to \pm \xi$ as $\beta \to \pm \infty$, where $\xi = (\pi/2,\pi/2)$, so $\vol(\SX) \ge d(\xi, -\xi) = \sqrt{2}\pi$.

\begin{figure}[tb]
\centering
\includegraphics[width=10cm]{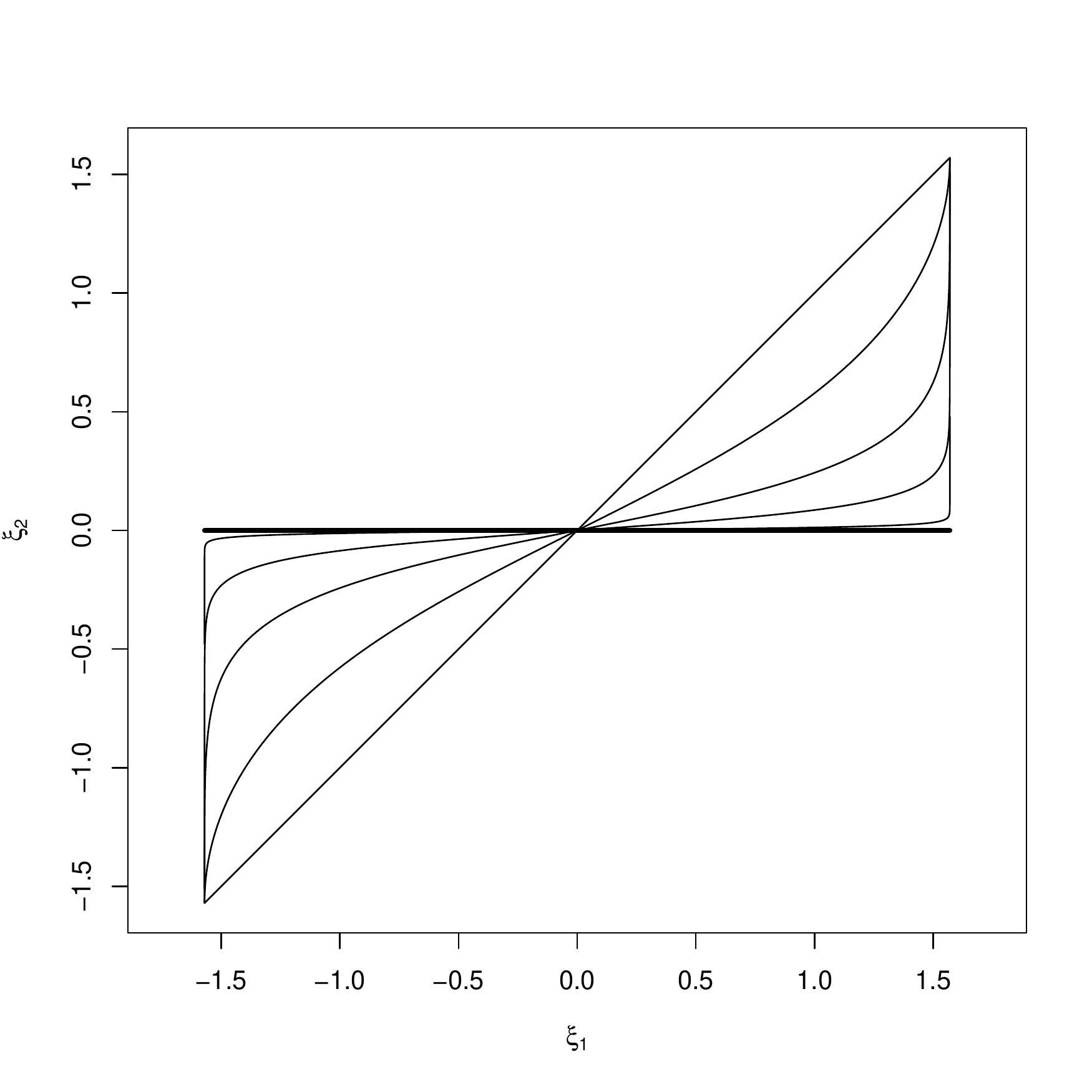} \\
\caption{The images of isometric embeddings of logistic regression models $\SX$ into the Euclidean square $\Xi$ when $q=1$ and $n=2$, for $X = [x_1 \,\,\, 1]^T$ with $x_1 = 1, 0.5, 0.2, 0.07, 0.01$ (thin lines) and $x_1 = 0$ (thick horizontal line).}
\label{F:worms}
\end{figure}

The definition (\ref{E:vol_defn}) expresses $\vol(\SX)$ as the integral over $\R^q$ of a continuous function of $\beta$ and $X$, so it might seem that this would guarantee that $\vol(\SX)$ is continuous in $X$.  This would be true if the integral were over a compact (bounded and closed) domain in $\R^q$, but this argument fails because $\R^q$ is not compact.  For example, $\int_0^R \lambda \exp(-\lambda t) dt$
is continuous as $\lambda$ approaches $0$ from above for any finite $R >0$ but not if $R=\infty$.  However, in Section \ref{S:duality} we will show that the integral (\ref{E:vol_defn}) can effectively be restricted to a fixed compact domain for all design matrices close to a given, generic $X$, so the above argument will then imply continuity at generic $X$.

\section{Volume as a measure of complexity in model selection}
\label{S:compl}

In this section, we briefly recall the MDL principle for model selection before deriving the approximate volume criterion of Definition \ref{D:volcrit} and applying this to an image processing problem.  As before, $X$ is an $n \times q$ full-rank matrix with $q \le n$ and $\SX$ is the corresponding logistic regression model.

\subsection{MDL for model selection}
\label{S:MDL}

The MDL principle is a general information-theoretic criterion for the selection of statistical models \cite{BarronEtAl98,Rissanen07}.  The MDL approach is particularly well-behaved for logistic regression models because these models have finite data spaces.

Suppose we are given a countable set of competing parametric models $\mathcal{S}_1, \mathcal{S}_2, \ldots$ for the data $y$, e.g., each $\mathcal{S}_i$ could be a logistic regression model (each with its own design matrix).  Then the MDL principle advocates choosing the model $\mathcal{S}_i$ with the shortest prefix code for $y$ constructed from a distribution which minimizes the maximum regret for $\mathcal{S}_i$ \cite[\S 2.4.3]{Grunwald05}.  It turns out that this means choosing the model with largest normalized maximum likelihood for the observed data $y$ \cite{Shtarkov87}.

In our main case of interest, namely logistic regression, the MDL principle therefore advocates choosing the model $\SX$ with the smallest value of
$$ - \log p(y | \hat{\beta}(y)) + \comp(\SX)$$
where $p(y | \beta)$ is the likelihood for the observed data $y \in \cy \defeq \{0,1 \}^n$ and regression parameter $\beta$, $\hat{\beta}(y)$ is the maximum likelihood estimate of $\beta$ corresponding to $y$ and the {\em parametric complexity} $\comp(\SX)$ of $\SX$ is
$$ \comp(\SX) \defeq \log \left(\sum_{y \in \cy} p(y | \hat{\beta}(y)) \right).$$
Since $\cy$ has $2^n$ elements, calculating $\comp(\SX)$ from this definition is not practical even for moderately large $n$, so instead we use the approximation
\beq \label{E:complexity}
\comp(\SX) \approx - \frac{q}{2}\log 2\pi + \log \vol(\SX)
\eeq
which is valid for large $n$ \cite[eqn. 2.21]{Grunwald05}.  Note that in (\ref{E:complexity}), an $n$ from \cite[eqn. 2.21]{Grunwald05} has been absorbed into our $\vol(\SX)$, since our Fisher information metric is for $n$ observations while that of \cite{Grunwald05} is effectively for $1$ observation, so our metric is $n$ times that of \cite{Grunwald05}.  Note also that $\SX$ satisfies the regularity conditions given in \cite[p. 48]{Grunwald05} for (\ref{E:complexity}) to be valid, because $\SX$ is an exponential family, $\comp(\SX)$ is finite (since $\cy$ is), and $\vol(\SX)$ is finite by Theorem \ref{T:volSXbound}.

\subsection{An approximation to the volume}
\label{S:approxvol}

Lemma \ref{L:discontvol} says that $\vol(\SX)$ is a discontinuous function of $X$, so it seems unlikely that there exists a closed-form expression for $\vol(\SX)$ which is valid for all $X$ (though such an expression might exist for generic $X$).  So in this section, we derive an approximation to the volume.  We begin by recalling the following definition, which was given briefly in Section \ref{S:overview}.

\begin{defn}[Generic]
\label{D:generic}
An $n \times q$ matrix with $q \le n$ is {\em generic} if any $q$ of its rows are linearly independent.
\end{defn}

Compare this with the condition that the matrix has full rank, which means that some set of $q$ of its rows are linearly independent.  So if $X$ is generic then it has full rank, but the converse is not true (unless $n=q$).

Suppose now that the rows $x_1, \ldots, x_n$ of $X$ and the rows $z_1, \ldots, z_q$ of a $q \times q$ matrix $Z$ are IID random variables so that $X$ has full rank with probability $1$.  This will hold if the covariate distribution is continuous (i.e., has a Lebesgue density) or is continuous apart from an intercept term (i.e., the first component of each $x_i$ is $1$ but the other components form a continuous random variable).  Also note that since the rows of $X$ and $Z$ are IID, the condition that $X$ is full-rank with probability $1$ implies that $X$ and $Z$ are generic with probability $1$.

Then for each $\beta \in \R^q$, by Lemma \ref{L:FImatrix_logreg}, the $(i,j)^{th}$ entry of the Fisher information metric is
\beq
\label{E:metric_IID}
[X^T D_{X\beta} X]_{ij}
= \sum_{k=1}^n  \frac{x_{ki} x_{kj}}{4\cosh^{2}(x_k\beta/2)}
\eeq
which is a sum of $n$ IID random variables.  So by (\ref{E:metric_IID}) and the law of large numbers, for each $\beta \in \R^q$ and large $n$,
\beq
\label{E:avmetric}
[X^T D_{X\beta} X]_{ij}
\approx \E[X^T D_{X\beta} X]_{ij}
= n  \E\left[ \frac{x_{1i} x_{1j}}{4\cosh^{2}(x_1\beta/2)}  \right]
= \frac{n}{q} \E[Z^T D_{Z\beta} Z]_{ij}
\eeq
since $x_1, \ldots, x_n, z_1, \ldots, z_q$ are all identically distributed.  Also, since $X^T D_{X\beta} X$ is continuous in $\beta$ and $X$, the approximation (\ref{E:avmetric}) holds with the same level of accuracy for all $\beta$ in a given compact region of $\R^q$, by the uniform law of large numbers \cite[Lemma 2.4]{NeweyMcfadden94}.  But we will see in the proof of Theorem \ref{T:volcont}, below, that the integral in (\ref{E:vol_defn}) can be restricted to a compact region (up to an arbitrarily small error).  So using the fact that $A \mapsto \sqrt{\det A}$ is a continuous function on the set of positive definite matrices $A$, we have
\begin{eqnarray}
\vol(\SX)
&=& \int_{\R^q} \sqrt{\det(X^T D_{X\beta} X)} d\beta  \mbox{ by definition}\nonumber\\
&\approx&  \left(\frac{n}{q}\right)^{q/2} \int_{\R^q} \sqrt{\det \E \left[Z^T D_{Z\beta} Z \right] } d\beta  \mbox{ by (\ref{E:avmetric}) if $n$ is large} \nonumber \\
&=&  c_q n^{q/2}  \label{E:avvol}
\end{eqnarray}
where the constant $c_q \defeq q^{-q/2} \int_{\R^q} \sqrt{\det \E \left[Z^T D_{Z\beta} Z \right] } d\beta$ does not depend on $n$ but can depend on $q$ and the covariate distribution.

For definiteness, we assume that the covariate distribution and hence $c_q$ is such that the approximation (\ref{E:avvol}) becomes
\beq
\label{E:avvol2}
\vol(\SX) \approx \pi^q \sqrt{{n \choose q}}.
\eeq
This has the asymptotic behaviour given by (\ref{E:avvol}), since ${n \choose q} \sim n^q/q!$ for large $n$ (where we say $a_n \sim b_n$ if $a_n/b_n \to 1$ as $n \to \infty$).  Also, limited computer experiments suggest that $\vol(\SX)$ is a constant multiple of the right-hand side of (\ref{E:avvol2}) for large $n$, where the multiple does not depend on $n$ or $q$ so it does not affect the corresponding model-selection criterion.  Lastly, (\ref{E:avvol2}) gives the minimum volume achieved by {\em generic} design matrices $X$, by Theorem \ref{T:vol_bounds_generic} (recall that $X$ is generic with probability $1$ in this section, and note that the lower volume bound in Theorem \ref{T:volSXbound} is only realised by highly degenerate design matrices).

Since $X$ is here assumed to be generic with probability $1$, we will use the approximation (\ref{E:avvol2}) whenever the design matrix $X$ is generic and, in fact, whenever $X$ has no zero rows (i.e., whenever no row $x_i$ of $X$ has all entries equal to $0$).  However, if $X$ is an $n \times q$ matrix with exactly $n_0$ zero rows then $\vol(\SX) = \vol(\SY)$ where $Y$ is the $(n-n_0) \times q$ matrix obtained from $X$ by deleting the zero rows.  Since $Y$ has $n-n_0$ rows and no zero rows, applying the approximation (\ref{E:avvol2}) to $Y$ and using $\vol(\SX) = \vol(\SY)$ gives
\beq\label{E:volapprox}
\vol(\SX) \approx \pi^q \sqrt{{n-n_0 \choose q}}
\eeq
for any $n \times q$ matrix $X$ with $q\le n$, where $n_0$ is the number of zero rows of $X$.

\subsection{An approximate volume criterion for model selection}
\label{S:approxvolcrit}

We can now use the MDL criterion (Section \ref{S:MDL}) and the approximations (\ref{E:complexity}) and (\ref{E:volapprox}) to obtain a criterion for model selection.  Substituting (\ref{E:volapprox}) into (\ref{E:complexity}) gives
\beq \label{E:complexityapprox}
\comp(\SX) \approx \frac{q}{2}\log \frac{\pi}{2} + \frac{1}{2}\log {n-n_0 \choose q}.
\eeq
So as in Definition \ref{D:volcrit}, our approximate volume criterion advocates choosing the model $\SX$ with the smallest value of
\beq \label{E:approxvolcrit}
- \log p(y | \hat{\beta}(y)) + \frac{q}{2}\log \frac{\pi}{2} + \frac{1}{2}\log {n-n_0 \choose q}
\eeq
where $y$ is the observed data, $\log p(y | \hat{\beta}(y))$ is the maximized log-likelihood and the design matrix of $\SX$ has dimensions $n \times q$ and exactly $n_0$ zero rows.

The main result of \cite{QianField02} shows that this criterion is strongly consistent, in the sense that it will select the correct model almost surely as $n$ goes to infinity, under the weak assumption that all design matrices considered have $n - n_0 \ge \lambda n$ for some fixed $\lambda > 0$.  For as noted above, ${n-n_0 \choose q} \sim (n-n_0)^q/q!$, so $\comp(\SX) \sim (q/2) \log (n-n_0)$ for large $n - n_0$, hence $n - n_0 \ge \lambda n$ implies that $\comp(\SX)$ satisfies the $O(\log(\log n))$ criterion of \cite{QianField02}.  Also, by considering the difference between the right-hand side of (\ref{E:complexityapprox}) and the same expression but with $q-1$ replacing $q$, we see that (\ref{E:complexityapprox}) is increasing in $q$ whenever $n - n_0 \ge 2q$, which by $n - n_0 \ge \lambda n$ is true for all models whenever $n$ is large enough.

For large $n$ and non-sparse models (i.e., those with $n - n_0 \approx n$), the above asymptotic results show that (\ref{E:approxvolcrit}) reduces to the Bayesian information criterion (BIC) \cite{WitEtAl12}.  However, (\ref{E:approxvolcrit}) penalizes sparse models less than the BIC.  We would therefore expect the approximate volume criterion to favour models with sparse design matrices, and hence to be well-suited to situations, such as that of Section \ref{S:imageproc}, where the signal is sparse.

\subsection{Application to image processing}
\label{S:imageproc}

We now present an application of the approximate volume criterion (Definition \ref{D:volcrit} and Section \ref{S:approxvolcrit}) to a simulated image processing problem.  This application was chosen partly because the problem and its solution can be presented graphically, not because we claim our method is particularly suited to image processing.

Consider an image consisting of black and white pixels, as in Figure \ref{F:imageproc}A.  We suppose the image is a noisy version of a black-and-white picture (the signal), where the effect of the noise is to reverse the shade of the pixels $10\%$ of the time, with the noise of different pixels being independent.  We can use logistic regression to de-noise this image as follows.

We interpreted the noisy image as binary data $y \in \{ 0,1 \}^n$ with one observation $y_i$ for each pixel $i$, where $y_i$ is $0$ or $1$ if the pixel is white or black (respectively).  If $A \subseteq \{ 1, \ldots, n \}$ is any subset of the set of all pixels then let $\chi_A$ be the column vector with $i^{th}$ entry equal to $1$ if $i \in A$ or $0$ if $i \not\in A$, so that $\chi_A$ is essentially the characteristic function of $A$.  For the analysis presented here, we generated a design matrix $X$ by specifying that each column of $X$ is of the form $\chi_A$ for some set of pixels $A$ representing a pixelated version of a thickened line segment with a given length, with one of $12$ different orientations and centred at one pixel from a lattice of pixels (which contains approximately one quarter of all pixels).  Since the image consisted of $151 \times 201$ pixels, this gave $q = 86,724$ covariates and $n = 30,351$ observations (note that $q > n$).  Using the LASSO \cite{Tibshirani96,Tibshirani11} implemented in R \cite{RCoreTeam} in the package glmnet \cite{FriedmanEtAl10}, we fitted a path of logistic regression models to the data $y$, with one fitted model for each value of the tuning parameter.  We then chose the tuning parameter using either the approximate volume criterion (Definition \ref{D:volcrit} and Section \ref{S:approxvolcrit}) or by cross-validation, and we plotted the expected values of the two fitted models in Figures \ref{F:imageproc}B and \ref{F:imageproc}C, respectively.  Since our model included an intercept, in the formula (\ref{E:approxvolcrit}) we took $n_0$ to be equal to the number of rows of $X$ which are zero apart from the intercept term.

The approximate volume criterion outperformed cross-validation in terms of mean absolute error ($0.0930$ versus $0.1065$, respectively) though not root-mean-square error ($0.2574$ versus $0.2527$, respectively).  However, from inspection of Figure \ref{F:imageproc}, the estimate based on the approximate volume criterion seems to be a better fit, being very slightly under-fitted to the observed data while the cross-validation estimate is clearly over-fitted.  In addition to this, the approximate volume criterion greatly outperforms cross-validation in terms of calculation speed.

\begin{figure}[tb]
\centering
\includegraphics[width=16cm]{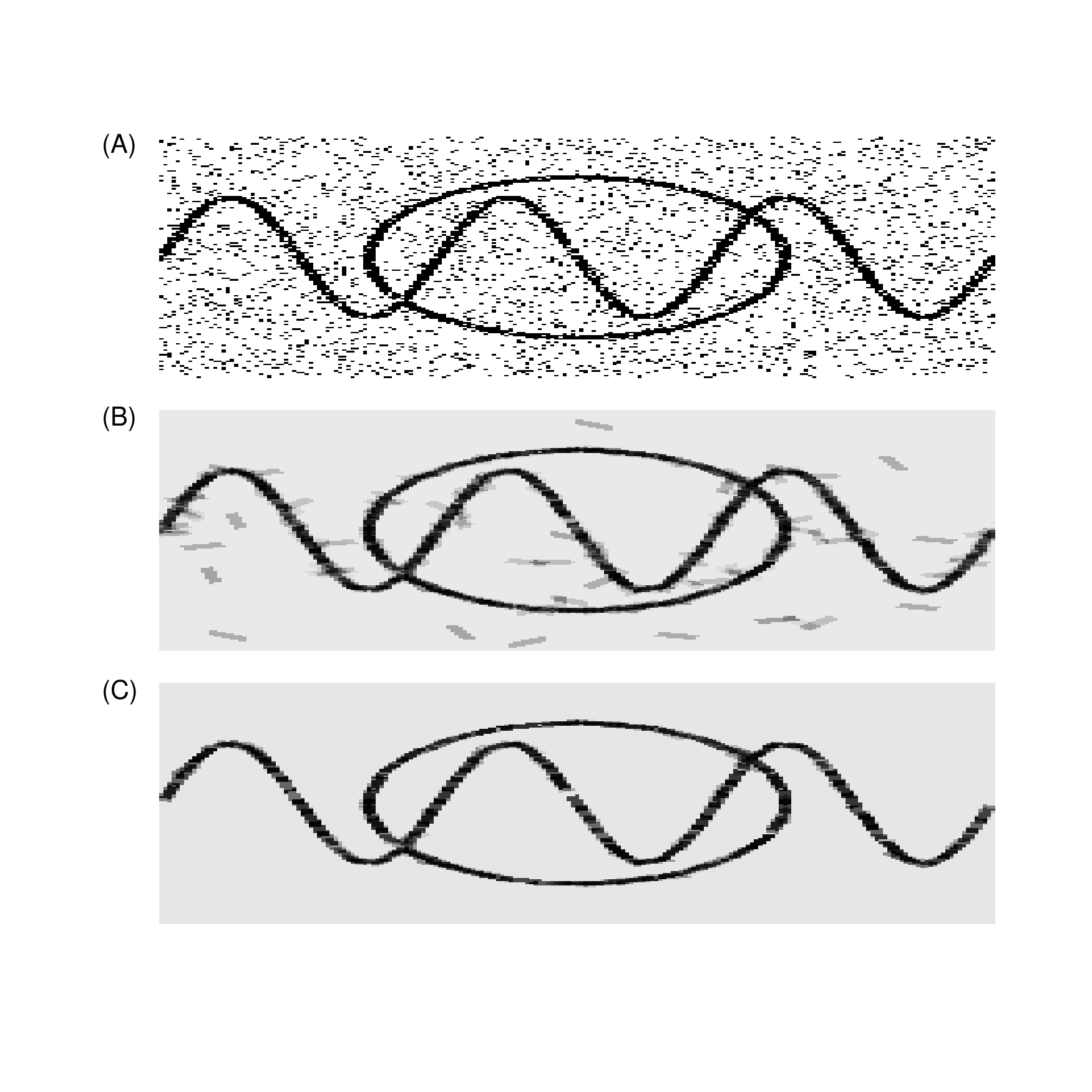} \\
\caption{A noisy black-and-white picture (A) and some de-noised versions of this picture obtained by logistic regression fitted with the LASSO and with tuning parameter chosen by cross-validation (B) or by the approximate volume criterion of Definition \ref{D:volcrit} and Section \ref{S:approxvolcrit} (C), as described in Section \ref{S:imageproc}.}
\label{F:imageproc}
\end{figure}

\section{The behaviour of $\phi(\beta)$ for large $\beta$ and generic $X$}
\label{S:largebeta}

Let $X$ be an $n \times q$ design matrix and let $\phi: \R^q \to \Xi$ be the isometric embedding of the natural parameter space of $\SX$ into the Euclidean cube $\Xi$, as given by (\ref{E:phi}).  In this section we will describe the behaviour of $\phi(\beta)$ for large $\beta$ and generic $X$.  This will allow us to show that $\vol(\SX)$ is continuous at generic $X$ (Section \ref{S:cont}) and that the reparameterisation map between the natural and expectation parameter spaces induces a topological duality (Section \ref{S:duality} and Figure \ref{F:duality}) between certain natural polygonal decompositions on the ideal boundaries of these two spaces (Sections \ref{S:poly_nat} and \ref{S:poly_exp}).

Assume from now on that $X$ is generic (see Definition \ref{D:generic}).

\begin{figure}[tb]
\centering
\includegraphics[width=15cm]{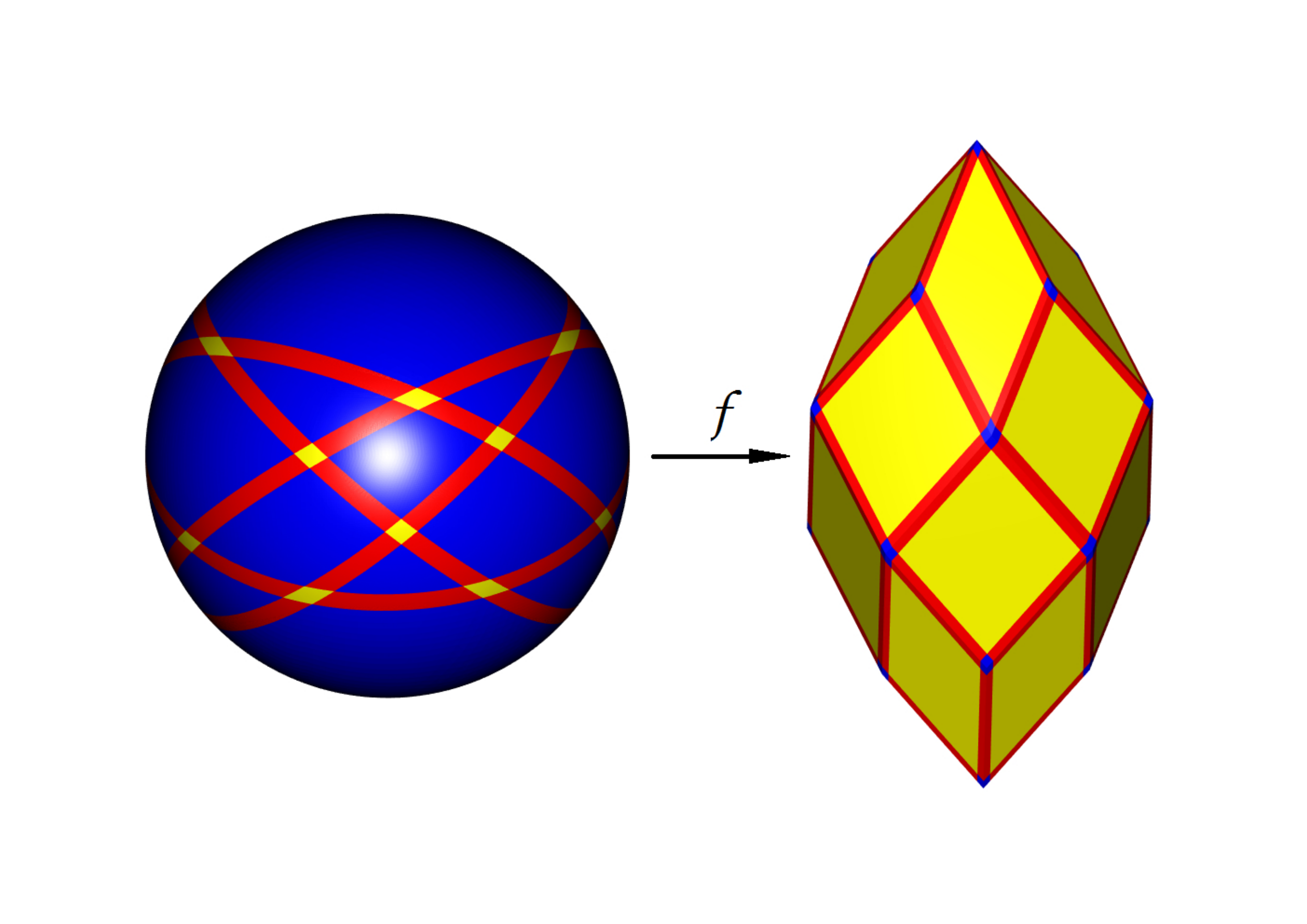} \\
\caption{The sphere $\Sr$ in the natural parameter space (left) and its image (right) under the reparameterisation map $f$ in the expectation parameter space (see Section \ref{S:duality}) when $q=3$ and $n=5$.  The faces $F_{s\delta} \subseteq \Sr$ are shown for $n_s=0$ (blue), $n_s=1$ (red) and $n_s=2$ (yellow), where $s \in S$  has $n_s$ zero components and $\delta=0.5$.  The map $f$ greatly contracts the blue regions and greatly expands the yellow regions (while shrinking the red regions length-wise and stretching them width-wise).  For example, the large blue region at the top of the sphere maps to the small blue region at the very top of the expectation parameter space. }
\label{F:duality}
\end{figure}

\subsection{A polygonal decomposition of the ideal boundary of the natural parameter space}
\label{S:poly_nat}

We now describe a natural polygonal decomposition of the ideal boundary of the natural parameter space $\R^q$ of $\SX$.

For any $r > 0$, let $\Sr$ be the $(q-1)$-dimensional sphere of radius $r$ centred at $0$ in $\R^q$, i.e.,
$\Sr = \{ \beta \in \R^q \mid \beta_1^2 + \ldots + \beta_q^2 = r^2 \}.$
We think of $r$ as being very large, so that $\Sr$ approximates a kind of ideal boundary or `sphere at infinity' of the natural parameter space.

The hyperplanes $\{ \beta \in \R^q \mid x_i \beta = 0 \}$ for $i = 1, \ldots, n$ divide $\Sr$ into spherical polytopes.  More precisely, we can define $\signmap: \Sr \to \{-1,0,1\}^n$ by
$$\signmap(\beta) = (\sign(x_1 \beta), \ldots, \sign(x_n \beta))$$
where, for any $t \in \R$, $\sign(t)$ is $-1$, $0$ or $1$ if $t<0$, $t=0$ or $t>0$ (respectively).  Let $S = \signmap(\Sr) \subseteq \{-1,0,1\}^n$ and, for any $s \in S$, define the corresponding face $F_s$ to be
$$F_s \defeq \signmapinv(s) = \{ \beta \in \Sr \mid \signmap(\beta) = s \}.$$
Each $F_s$ is a (relatively open) spherical polytope, since it is the non-empty set of all $\beta \in \Sr$ which satisfy a set of homogeneous linear equations and inequalities.  Also, the polytopes $F_s$ for all $s \in S$ are clearly disjoint and their union is $\Sr$.  Lastly, since $X$ is generic, $F_s$ is of dimension $q-1-n_s$ (i.e., of codimension $n_s$), where $n_s$ is the number of zero components of $s$ (i.e., the number of indices $i = 1, \ldots, n$ with $s_i = 0$).

We now define a set $F_{s\fdelta} \subseteq \Sr$ which will serve as an approximation to the face $F_s$.
Given any $\fdelta \in (0,\pi/2)$, let $\Delta_\fdelta = 2\arctanh(\sin(\pi/2 - \fdelta))$ so that
$|\phi_i(\beta)| < \pi/2 - \delta$ if and only if $|x_i \beta| < \Delta_\fdelta$,
by (\ref{E:phi}).  Define $\signmapeps: \Sr \to \{-1,0,1\}^n$ by $\signmapeps(\beta) = (\sign_\fdelta(x_1 \beta), \ldots, \sign_\fdelta(x_n \beta))$
where, for any $t \in \R$, $\sign_\fdelta(t)$ is $-1$, $0$ or $1$ if $t<-\Delta_\fdelta$, $|t| \le \Delta_\fdelta$ or $t>\Delta_\fdelta$ (respectively).  Then for any $s \in S$, define
\beq
\label{E:Fseps}
F_{s\fdelta} \defeq \signmapepsinv(s) = \{ \beta \in \Sr \mid \signmapeps(\beta) = s \}.
\eeq
Note that the sets $F_{s\fdelta}$ for all $s \in S$ again partition $\Sr$ into disjoint regions.

The face $F_{s\fdelta}$ is a neighbourhood of $F_s$ in $\Sr$ minus a neighbourhood of the boundary of $F_s$, where these neighbourhoods grow larger with decreasing $\fdelta$.  However, the size of the neighbourhoods do not depend on $r$, so the neighbourhoods can be made arbitrarily small, in relative terms, by making $r$ large.  So for given $\fdelta$, $F_{s\fdelta}$ approximates $F_s$ for large enough $r$.

\subsection{$\vol(\SX)$ is continuous at generic $X$}
\label{S:cont}

In this section we will use the volume bounds of Theorem \ref{T:vol_bounds} to show that $\vol(\SX)$ is continuous at generic $X$, and to suggest a way of numerically calculating $\vol(\SX)$ for such $X$ (see the end of this section).  Note that while the discontinuity of $\vol(\SX)$ (see Lemma \ref{L:discontvol}) makes it unlikely that a closed-form expression for $\vol(\SX)$ exists in general, the following theorem raises the possibility that a simple expression for the volume might exist for generic $X$.

\begin{theorem}
\label{T:volcont}
The volume $\vol(\SX)$ is a continuous function of $X$ at generic $X$.
\end{theorem}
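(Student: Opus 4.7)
The strategy is to split the integration as $\vol(\SX) = \vol(\SX | B_R) + \vol(\SX | \R^q \setminus B_R)$ for a large ball $B_R \subseteq \R^q$ and to show that the tail $\vol(\SX | \R^q \setminus B_R)$ can be made uniformly small for all $X$ in a neighborhood $N$ of $X_0$. On the compact ball $B_R$ the integrand $\sqrt{\det(X^T D_{X\beta} X)}$ is jointly continuous in $(\beta, X)$, and hence uniformly continuous on $B_R \times N$, so $\vol(\SX | B_R)$ is continuous in $X$; an $\epsilon/3$-triangle inequality then delivers continuity of $\vol(\SX)$ at $X_0$.

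To control the tail, fix $\fdelta \in (0, \pi/2)$ and partition $\R^q$ into the disjoint regions $U_s^X = \{\beta \in \R^q \mid \signmapeps(\beta) = s\}$ for $s \in \{-1,0,1\}^n$, with $\signmapeps$ defined by the same formula as in Section \ref{S:poly_nat} but applied to all of $\R^q$. Two consequences of genericity are then used. First, genericity is an open condition (the $\binom{n}{q}$ determinants of the $q \times q$ submatrices are nonzero continuous functions of $X$), so every $X$ in a sufficiently small closed neighborhood $N$ of $X_0$ is generic; for any sign pattern $s$ with $n_s \ge q$ zero components, the $n_s$ constraints $|x_i\beta| \le \Delta_\fdelta$ contain $q$ linearly independent ones that force $\beta$ into a bounded region whose diameter depends continuously on the relevant $q \times q$ submatrix of $X$. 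Thus there is a radius $R$ (depending only on $N$ and $\fdelta$) so that $U_s^X \subseteq B_R$ whenever $X \in N$ and $n_s \ge q$; consequently, only sign patterns with $n_s \le q - 1$ contribute to the tail $\R^q \setminus B_R$. Second, directly from the definition of $\signmapeps$, any $\beta \in U_s^X$ satisfies $|\phi_i(\beta)| \le \pi/2 - \fdelta$ when $s_i = 0$ and $|\phi_i(\beta)| \ge \pi/2 - \fdelta$ when $s_i \ne 0$, so $\phi(U_s^X) \subseteq \mbox{Box}(c_s, l_s)$ with $l_{s,i} = \fdelta$ if $s_i \ne 0$ and $l_{s,i} = \pi$ if $s_i = 0$.

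Applying the upper bound of Theorem \ref{T:vol_bounds} to each $U_s^X$ with $n_s \le q - 1$ gives $\vol(\SX | U_s^X) \le \sum_{|I|=q} \prod_{i \in I} l_{s,i}$; since any $q$-subset $I$ can overlap the $n_s$ ``long'' coordinates in at most $n_s < q$ places, every summand carries at least a factor $\fdelta^{q - n_s} \ge \fdelta$. Summing over the finitely many $s$ with $n_s \le q - 1$ (whose cardinality depends only on $n$ and $q$) yields $\vol(\SX | \R^q \setminus B_R) \le C(n,q)\, \fdelta$ uniformly in $X \in N$. Given $\epsilon > 0$, one first chooses $\fdelta$ with $C(n,q)\, \fdelta < \epsilon/3$, which fixes the corresponding $R$, and then shrinks the neighborhood of $X_0$ inside $N$ so that $|\vol(\SX | B_R) - \vol(\mathcal{S}_{X_0} | B_R)| < \epsilon/3$ by uniform continuity on $B_R \times N$; the triangle inequality completes the argument. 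The main obstacle is the uniform tail bound, whose success rests on both facets of genericity: genericity of $X_0$ forces $n_s \le q - 1$ outside a neighborhood-independent ball, and the bounding boxes from Theorem \ref{T:vol_bounds} each contribute at least one factor of $\fdelta$, producing an $X$-independent $O(\fdelta)$ estimate.
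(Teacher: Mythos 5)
Your proposal is correct and follows essentially the same route as the paper: split $\vol(\SX)$ into a contribution from a compact ball $B_R$ plus a tail, bound the tail uniformly on a neighbourhood of $X$ via the $\fdelta$-sign regions, the box containment ($l_i=\fdelta$ when $s_i\neq 0$, $l_i=\pi$ when $s_i=0$) and the upper bound of Theorem \ref{T:vol_bounds}, with genericity guaranteeing that each $q$-subset picks up at least one factor $\fdelta$, and then conclude by continuity of $\vol(\SX|B_R)$ on the compact domain. The only difference is organizational and harmless: you partition $\R^q$ directly by $\signmapeps$ and eliminate sign patterns with at least $q$ zero components through a uniform boundedness argument, whereas the paper routes the tail estimate through the spherical faces $F_{s\fdelta}\subseteq\Sr$ and the pattern set $S$.
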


\begin{proof}  Let $B_R = \{ \beta \in \R^q \mid \beta_1^2 + \ldots + \beta_q^2 \le R^2 \}$ be the closed ball in $\R^q$ of radius $R>0$ centred at $0$ (with $R$ chosen below).  Our strategy is to show, for any $n \times q$ matrix $Z$ in a neighbourhood of a given generic $n \times q$ matrix $X$, that the contribution to $\vol(\SZ)$ from outside $B_R$ in the integral (\ref{E:vol_defn}) is arbitrarily small.  This will effectively allow us to restrict the integral (\ref{E:vol_defn}) to the domain $B_R$ for all $Z$ in a neighbourhood of $X$.  Then since $B_R$ is compact (bounded and closed) and the integrand in (\ref{E:vol_defn}) is a continuous function of $X$ and $\beta$, this will imply that $\vol(\SX)$ is continuous at $X$.

So let $X$ be a generic $n \times q$ matrix, as above, and let any $\fdelta \in [0,\pi/2)$ be given.  Then there is some $R>0$ and some neighbourhood $\mathcal{U}$ of $X$ in the space of $n \times q$ real matrices so that if $Z \in \mathcal{U}$ then $Z$ is generic, $S = \signmapZ(\Sr)$ (recall that $S = \signmap(\Sr)$ by definition) and $F_{s\fdelta Z}$ is non-empty for all $s \in S$ and $r \ge R$, where $F_{s\fdelta Z}$ is as in (\ref{E:Fseps}) but with $Z$ replacing $X$.

Then by (\ref{E:Fseps}) and the definition of $\Delta_\fdelta$, if $r > R$ and $s_i \not=0$ then
$| (\phi_Z)_i(\beta) - s_i \pi/2 | < \fdelta$ for all $\beta \in F_{s\fdelta Z}$, where $\phi_Z$ is as in (\ref{E:phi}) but with $Z$ replacing $X$.  So $\phi_Z(F_{s\fdelta Z}) \subseteq \mbox{Box}(c,l)$ where $l = (l_1, \ldots, l_n)$, $c = (c_1, \ldots, c_n)$ and $l_i = \fdelta$, $c_i=s_i (\pi - \fdelta)/2$ if $s_i \not= 0$ or $l_i = \pi$, $c_i=0$ if $s_i = 0$.  Therefore $\phi_Z(U) \subseteq \mbox{Box}(c,l)$, where $U = \cup_{r > R} F_{s\fdelta Z}$ and we recall that $F_{s\fdelta Z}$ is a subset of $\Sr$ so $\cup_{r > R} F_{s\fdelta Z}$ means the union of these subsets for all $r > R$.  So by Theorem \ref{T:vol_bounds},
$\vol(\SZ |U) \le \sum_I\prod_{i \in I}l_i$ where the sum is over all subsets $I \subseteq \{1, \ldots, n\}$ with $q$ elements.  But since $X$ is generic, no more than $q-1$ of the $s_i$ can be zero, hence $\prod_{i \in I}l_i \le \fdelta \pi^{q-1}$ for each $I$ so
$\vol(\SZ |U) \le  \fdelta \pi^{q-1} {n \choose q}$.  Then since $\R^q \setminus B_R = \cup_{s \in S} \cup_{r > R} F_{s\fdelta Z}$, we have
\beq
\label{E:volnegligible}
\vol(\SZ | \R^q \setminus B_R) \le   |S| \fdelta \pi^{q-1} {n \choose q}
\eeq
where $|S|$ is the number of elements of $S$.

Now, because $B_R$ is compact and the integrand in (\ref{E:vol_defn}) is a continuous function of $X$ and $\beta$, $\vol(\SZ | B_R)$ is a continuous function of $Z$ \cite[Theorem 5.6]{Elstrodt96} (this also follows trivially from the fact that the integrand is uniformly continuous on $B_R$).
So after possibly restricting $\mathcal{U}$ to a smaller neighbourhood $\mathcal{U}_\fdelta$ of $X$, if $Z \in \mathcal{U}_\fdelta$ then $| \vol(\SZ | B_R) - \vol(\SX | B_R)| < \fdelta$. Combining this with (\ref{E:volnegligible}) gives
$$| \vol(\SZ) - \vol(\SX)| < \fdelta \left( 1 + |S| \pi^{q-1} {n \choose q} \right)$$
for any $Z \in \mathcal{U}_\fdelta$.

So given any $\feps > 0$, if we set $\fdelta = \feps \left( 1 + |S| \pi^{q-1} {n \choose q} \right)^{-1}$ above then we have shown that there exists a neighbourhood $\mathcal{U}_\fdelta$ of $X$ so that $| \vol(\SZ) - \vol(\SX)| < \feps$ for any $Z \in \mathcal{U}_\fdelta$, hence the theorem is proved.
\end{proof}

The proof of this theorem suggests a way of numerically calculating $\vol(\SX)$ for generic $X$.  For (\ref{E:volnegligible}) gives explicit bounds on the size of $\vol(\SX | \R^q \setminus B_R)$, so (\ref{E:volnegligible}) allows us to choose $R$ and $\delta$ so that $\vol(\SX | \R^q \setminus B_R)$ is smaller than the desired accuracy of the calculation.  Therefore, $\vol(\SX)$ can be approximated by $\vol(\SX | B_R)$ (or $\vol(\SX | U)$ for any $U \supseteq B_R$), and this can be calculated with standard software for integrals over compact domains in $\R^q$.

\subsection{A polygonal decomposition of the ideal boundary of the expectation parameter space}
\label{S:poly_exp}

In this section, we describe the reparameterisation map between the natural and expectation parameter spaces of $\SX$ and then describe the polygonal decomposition of the ideal boundary of the expectation parameter space.

Define $f: \R^q \to \R^q$ by $f(\beta) = X^T h(\phi(\beta))$ where
$h: \overline{\Xi} \to [0,1]^n$ is given by $h = (h_1, \ldots, h_n)$ with $h_i(\xi) = \hf (1 + \sin \xi_i)$ and $\overline{\Xi} = [-\pi/2,\pi/2]^n$ is the closure of $\Xi$.  We claim that $f$ is the reparameterisation map between the natural and expectation parameter spaces of $\SX$.  For by (\ref{E:xi_param}), the restriction of $h$ to the interior $\Xi$ of the closed cube $\overline{\Xi}$ is the reparameterisation map from the Euclidean parameter space of the saturated model to the expectation one.  Therefore $h(\phi(\beta))$ is the expectation parameter of the saturated model corresponding to the natural parameter $\beta$ of $\SX$.  So $h(\phi(\beta))$ is the expected value $\E[y]$ of the sufficient statistic $y$ of the saturated model, where $y$ is distributed according to the natural parameter $\beta$ of $\SX$, hence $f(\beta) = X^T h(\phi(\beta)) = X^T \E[y]= \E[X^T y]$.  Since the logistic regression model $\SX$ is an exponential family with natural parameter $\beta$ and natural sufficient statistic $X^T y$, this shows that $f(\beta)$ is the expectation parameter corresponding to natural parameter $\beta$, proving the claim.

We can now describe the polygonal decomposition of the ideal boundary of the expectation parameter space.  The closure of the expectation parameter space is the convex hull of the finite set $\{ X^T y \mid y \in \{0,1\}^n \}$ of sufficient statistics \cite[Corollary 9.6]{BarndorffNielsen78}, so it is a convex polytope.  Furthermore, since $X$ has full rank, this convex polytope is $q$-dimensional.  Its boundary therefore has a natural cell decomposition into (relatively open) polytopes of dimensions $0, \ldots, q-1$.

We can give a more precise description of this polygonal decomposition in terms of the obvious polygonal decomposition of the boundary of the cube $\overline{\Xi}$.  Let $S$ be as in Section \ref{S:poly_nat} and to any $s \in S$, let $G_s$ be the Euclidean polytope in the boundary of the cube $\overline{\Xi}$ given by
$$ G_s \defeq \{ \xi \in \overline{\Xi} \mid \xi_i = s_i \pi/2 \mbox{ for any $i$ for which $s_i \not= 0$} \}. $$
Note that $G_s$ is of dimension $n - (n - n_s) = n_s$, where $n_s$ is the number of zero components of $s$.

Define $H_s \defeq X^T h(G_s)$.  We claim that $H_s$ is a polygonal face in the boundary of the closure of the expectation parameter space.  To see this, note that $h(G_s)$ is a polygonal face in the boundary of the cube $[0,1]^n$ which is just a translated and re-scaled version of $G_s$.  So since the map $\ex \mapsto X^T \ex$ is linear, $H_s$ is a polytope in $\R^q$, and is equal to the convex hull of its vertices.
But each of these vertices is of the form $H_t = X^T h(G_t)$ where $t \in S$ has $n_t = 0$.
Therefore, $h(G_t) = y \in \{ 0,1 \}^n$ (more properly, $h(G_t) = \{ y \}$), and any $\beta \in F_t$ separates the $0$s and $1$s of $y$ (meaning $x_i \beta > 0$ if $y_i = 1$ and $x_i \beta < 0$ if $y_i = 0$).  Therefore no maximum likelihood estimate corresponding to data $y$ can exist, so $H_t = X^T h(G_t) = X^T y$ cannot lie in (the interior of) the expectation parameter space, by \cite[Corollary 9.6]{BarndorffNielsen78}.  Therefore $H_s$ is a polygonal face in the ideal boundary of the expectation parameter space, as claimed.

Since $X$ is generic, $\ex \mapsto X^T \ex$ is injective on all $k$-dimensional faces in the boundary of the cube $[0,1]^n$ for $k \le q$, so $H_s$ has the same dimension as $G_s$, namely $n_s$.

Lastly, it follows from Corollary \ref{C:boundaryphi}, below, that the closure of the expectation parameter space is obtained by adding $\cup_{s \in S} H_s$ to this space, so every face in the ideal boundary of the expectation parameter space is of the form $H_s$ for some $s \in S$ (though we will not use this fact until after Corollary \ref{C:boundaryphi}).

\subsection{Duality between the polygonal boundary decompositions}
\label{S:duality}

We will now show, for generic $X$, that the reparameterisation map $f$ between the natural and expectation parameter spaces of $\SX$ induces a topological duality between the polygonal decompositions of the ideal boundaries of these two spaces (see Figure \ref{F:duality}).  Under this map, $k$-dimensional faces in the $(q-1)$-dimensional boundary of one space correspond to $(q-1-k)$-dimensional faces in the boundary of the other space, for all $k = 0, \ldots, q-1$.  This highly unusual behaviour is interesting in its own right, but it also has implications for the computation of $\vol(\SX)$.

We will begin by showing that the cell $F_s$ in the ideal boundary of the natural parameter space of $\SX$ approximately corresponds under $\phi$ to the face $G_s$ in the ideal boundary of the Euclidean cube $\Xi$.  Then the duality result described above will follow from the close relationship between $G_s$ and $H_s$ developed in Section \ref{S:poly_exp}.

If $A$ and $B$ are any bounded subsets of the same Euclidean space then the Hausdorff distance $d_H(A,B)$ between $A$ and $B$ is
$$d_H(A,B) \defeq \inf \{ \feps \ge 0 \mid A \subseteq N_\feps(B) \mbox{ and } B \subseteq N_\feps(A) \}$$
where $N_\feps(A) = \{ x \in \E^n \mid \mbox{$\exists a \in A$ so that $d(x,a) < \feps$} \}$ is an $\feps$-neighbourhood of $A$, and similarly for $N_\feps(B)$.

The following theorem says that the image of $F_{s\fdelta}$ under $\phi$ is approximately $G_s$, with the approximation becoming arbitrarily good for $r$ large enough.  This is despite the fact that $F_s$ and $G_s$ have different dimensions in general and the fact that $F_{s\fdelta}$ approximates $F_s$ arbitrarily well for large enough $r$ (recall that $F_{s\fdelta} \subseteq \Sr$ so $F_{s\fdelta}$ depends on $r$).

\begin{theorem}
\label{T:duality}
For any $\feps > 0$, there exists $R > 0$ so that
$$d_H(\phi(F_{s\fdelta}),G_s) < \feps$$
for any $s \in S$ and any $r>R$, where $\fdelta = \feps/q\sqrt{n}$.
\end{theorem}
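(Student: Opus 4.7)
The plan is to bound both parts of the Hausdorff distance separately. The inclusion $\phi(F_{s\fdelta}) \subseteq N_\feps(G_s)$ is the easy direction; the reverse inclusion $G_s \subseteq N_\feps(\phi(F_{s\fdelta}))$ is the substantive one, and requires an explicit construction of a preimage $\beta \in F_{s\fdelta}$ for each $\xi \in G_s$ that works uniformly for all sufficiently large $r$.

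For the easy direction, note that $\Delta_\fdelta = 2\arctanh(\sin(\pi/2 - \fdelta))$ was chosen so that by (\ref{E:phi}), $|x_i\beta| > \Delta_\fdelta$ is equivalent to $|\phi_i(\beta)| > \pi/2 - \fdelta$, with matching signs. Hence if $\beta \in F_{s\fdelta}$ then $|\phi_i(\beta) - s_i \pi/2| < \fdelta$ for every $i$ with $s_i \neq 0$, while $\phi_i(\beta) \in [-\pi/2,\pi/2]$ automatically for $s_i = 0$. The point $\xi^* \in G_s$ defined by $\xi^*_i = s_i \pi/2$ for $s_i \neq 0$ and $\xi^*_i = \phi_i(\beta)$ for $s_i = 0$ therefore satisfies $|\phi(\beta) - \xi^*|^2 < (n-n_s)\fdelta^2 \le n\fdelta^2$, a bound independent of $r$.

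For the harder direction, given $\xi \in G_s$, I first truncate to $\tilde\xi \in G_s$ by replacing any coordinate $\xi_i$ with $s_i = 0$ and $|\xi_i| > \pi/2 - \fdelta$ by $\sign(\xi_i)(\pi/2 - \fdelta)$, so that $|\tilde\xi_i - \xi_i| < \fdelta$ for each such $i$. The goal is then to produce $\beta \in F_{s\fdelta}$ satisfying $x_i\beta = 2\arctanh(\sin \tilde\xi_i)$ for every $i$ with $s_i = 0$, so that $\phi_i(\beta) = \tilde\xi_i$ by (\ref{E:phi}). These are $n_s$ linear conditions on $\beta \in \R^q$ with right-hand sides in $[-\Delta_\fdelta, \Delta_\fdelta]$. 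Since $F_s \neq \emptyset$ forces $n_s \le q-1$ and $X$ is generic, the rows $\{x_i : s_i = 0\}$ are linearly independent, so these conditions uniquely determine the component $\alpha^\perp(\tilde\xi) \in W^\perp$ of $\beta$ orthogonal to $W \defeq \bigcap_{i : s_i = 0} \ker x_i$, and $\|\alpha^\perp(\tilde\xi)\|$ is bounded uniformly in $\tilde\xi$.

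The main obstacle is meeting the remaining sign conditions $s_i x_i \beta > \Delta_\fdelta$ for $s_i \neq 0$ while keeping $\beta$ on $\Sr$. To handle this I would fix, for each $s \in S$, a unit vector $u_s \in (1/r) F_s \subseteq W \cap S^{q-1}$ (nonempty because $F_s$ is a nonempty relatively open polytope in $W \cap \Sr$ by genericity) and set $c_s \defeq \min_{i : s_i \neq 0} s_i x_i u_s > 0$. The construction $\beta(\tilde\xi) \defeq \alpha^\perp(\tilde\xi) + \sqrt{r^2 - \|\alpha^\perp(\tilde\xi)\|^2}\, u_s$ then has norm $r$ (since $\alpha^\perp \perp u_s$), satisfies $x_i\beta(\tilde\xi) = 2\arctanh(\sin \tilde\xi_i)$ for $s_i = 0$ (because $u_s \in W$), and for $s_i \neq 0$ gives $s_i x_i \beta(\tilde\xi) \ge \sqrt{r^2 - \|\alpha^\perp\|^2}\, c_s - \Delta_\fdelta$. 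Choosing $R$ large enough (uniformly in $\tilde\xi$ and in the finite set $S$, using the uniform bounds on $c_s$ and $\|\alpha^\perp\|$) forces this to exceed $\Delta_\fdelta$ whenever $r > R$, placing $\beta(\tilde\xi) \in F_{s\fdelta}$. By construction $\phi_i(\beta(\tilde\xi)) = \tilde\xi_i$ for $s_i = 0$ and $|\phi_i(\beta(\tilde\xi)) - \xi_i| < \fdelta$ for $s_i \neq 0$, giving $|\phi(\beta(\tilde\xi)) - \xi|^2 < n\fdelta^2$. Combining both directions yields $d_H(\phi(F_{s\fdelta}), G_s) < \sqrt{n}\,\fdelta = \feps/q \le \feps$, as claimed.
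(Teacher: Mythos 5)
Your proposal is correct and takes a genuinely different route from the paper's. The paper proves the hard inclusion $G_s \subseteq N_\feps(\phi(F_{s\fdelta}))$ by induction on the codimension $n_s$, using the dual decomposition of $\Sr$ into faces $F_t^*$ and a topological argument (homotopy non-triviality of $\phi(\partial F_s^*)$ in a neighbourhood of $\partial G_s$, invoking Bredon) to find a point of $F_s^* \cap F_{s\fdelta}$ whose image projects onto a given interior point of $G_s$; the error accumulates as $\feps_k=(k+1)\feps_0$ across the induction, which is exactly the source of the factor $q$ in $\fdelta=\feps/q\sqrt{n}$. You instead build explicit preimages: genericity makes the rows $\{x_i : s_i=0\}$ (at most $q-1$ of them) linearly independent, so you solve $x_i\beta = 2\arctanh(\sin\tilde\xi_i)$ exactly on the orthogonal complement of $W=\bigcap_{i:s_i=0}\ker x_i$ and then push off to radius $r$ along a fixed direction $u_s$ in the cone over $F_s$, which forces $s_i x_i\beta > \Delta_\fdelta$ for $s_i\neq 0$ once $r$ is large, uniformly over $\tilde\xi$ and the finite set $S$. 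This is more elementary (linear algebra only, no algebraic topology), makes the uniformity in $r$ transparent, and in fact yields the sharper estimate $d_H(\phi(F_{s\fdelta}),G_s)\le\sqrt{n}\,\fdelta$, i.e. the theorem already holds with $\fdelta=\feps/\sqrt{n}$, so the paper's factor of $q$ is an artefact of its inductive error bookkeeping. What the paper's argument buys is that it uses only the combinatorics of the two face decompositions rather than explicit solvability of the linear constraints, so it is less tied to the concrete form of $\phi$. One small correction to your write-up: in the intermediate bound $s_i x_i\beta(\tilde\xi)\ge\sqrt{r^2-\|\alpha^\perp\|^2}\,c_s-\Delta_\fdelta$, the subtracted term is not justified as $\Delta_\fdelta$, since the construction only controls $|x_i\alpha^\perp(\tilde\xi)|\le\Delta_\fdelta$ for indices with $s_i=0$; for $s_i\neq 0$ you should subtract instead something like $\max_i\|x_i\|\cdot\sup_{\tilde\xi}\|\alpha^\perp(\tilde\xi)\|$, which is still a constant independent of $r$ and $\tilde\xi$ (as your choice of $R$ already anticipates), so the conclusion is unaffected.
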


\begin{proof}
Let $\feps_0 > 0$ be given and let $\fdelta = \feps_0/\sqrt{n}$ (and assume, without loss of generality, that $\feps_0$ is small enough that $\fdelta < \pi/2$).  Choose $R > 0$ so that $F_{s\fdelta}$ is a non-empty set in $\Sr$ for all $s \in S$ and all $r>R$.

By (\ref{E:phi}) and the definition of $F_{s\fdelta}$, if $i$ is such that $s_i \not=0$ then
$| \phi_i(\beta) - s_i \pi/2| < \fdelta$ for all $\beta \in F_{s\fdelta}$.
Therefore $\phi(F_{s\fdelta}) \subseteq N_{\feps_0}(G_s)$.

Now, let $\feps_k = (k+1) \feps_0$.  We will use induction on $k$ to prove $G_s \subseteq N_{\feps_k}(\phi(F_{s\fdelta}))$ for all $s \in S$ with $n_s \le k$, where $n_s$ is the number of components of $s$ which are zero.  For the base case, $k = n_s = 0$ so $G_s$ is a point, hence the fact just proved that $\phi(F_{s\fdelta}) \subseteq N_{\feps_0}(G_s)$ implies $G_s \subseteq N_{\feps_0}(\phi(F_{s\fdelta}))$, here also using $F_{s\fdelta} \not= \emptyset$.  Now, for
$k \in \{ 0, \ldots, q-2\}$, assume the induction hypothesis that $G_s \subseteq N_{\feps_k}(\phi(F_{s\fdelta}))$ for all $s \in S$ with $n_s \le k$.  Our goal is to prove this for $k+1$ so let $s \in S$ be such that $n_s = k+1$.

Dual to the polygonal decomposition of $\Sr$ into faces $F_t$ for $t \in S$ there is a decomposition of $\Sr$ into topological, relatively open polygonal faces $F_t^*$ for $t \in S$, so that the face $F_t^*$ has dimension $n_t$ (while $F_t$ has dimension $q - 1 - n_t$, i.e., codimension $n_t$) and so that the association $F_t \mapsto F_t^*$ reverses inclusions (on the closures of the faces), see \cite[\S 3.4]{Grunbaum03} for related results.

Now, with $s \in S$ such that $n_s = k+1$, as above, define
$$ T_s \defeq \{ t \in S \mid \mbox{$\forall i$, $s_i \not= 0$ implies $t_i \not= 0$} \}.$$
Then $\cup_{t\in T_s} F_t^*$ is the closure of $F_s^*$ (since, for each $t \in T_s$, $F_t$ contains the closure of $F_s$ so $F_t^*$ is contained in the closure of $F_s^*$ by the inclusion-reversing property).  So by choosing a larger $R$ (and hence $r$) if need be, the face $F_s^*$ will lie in $\cup_{t\in T_s} F_{t \delta}$.  So by the induction hypothesis, $G_t \subseteq N_{\feps_k}(\phi(F_{t\fdelta}))$ for all $t\in T_s\setminus\{s\}$.  But the ideal boundaries of the faces $G_s$ and $F_s^*$ are $\partial G_s \defeq \cup_{t\in T_s\setminus\{s\}} G_t$ and $\partial F_s^* \defeq \cup_{t\in T_s\setminus\{s\}} F_t^*$ respectively, so this implies that $\partial G_s \subseteq N_{\feps_k}(\phi(\partial F_s^*))$ and that the topological sphere $\phi(\partial F_s^*)$ is homotopically non-trivial in the $\feps_k$-neighbourhood of the topological sphere $\partial G_s$.

Now, given any $\xi \in G_s$, our goal is to show that there is some $\beta \in F_{s\delta}$ so that $d(\phi(\beta), \xi) < \feps_{k+1}$.  We now consider two cases, $\xi \not\in N_{\feps_k}(\partial G_s)$ and $\xi \in N_{\feps_k}(\partial G_s)$.  Write $\xi = \xi_1$ in the first case.  Then since $\phi(\partial F_s^*)$ is homotopically non-trivial in $N_{\feps_k}(\partial G_s)$, there is some $\beta \in F_s^*$ so that the orthogonal projection of $\phi(\beta)$ onto the span of $G_s$ is $\xi_1$ (essentially by \cite[Th. VI.14.14]{Bredon93}).   Also, $\beta \in F_s^* \cap F_{s\delta}$ since otherwise $\xi_1 \in N_{\feps_k}(\partial G_s)$ by the induction hypothesis.  But we have already shown that $\phi(F_{s\fdelta}) \subseteq N_{\feps_0}(G_s)$, so $d(\phi(\beta), \xi_1) < \feps_0$.  Now consider the second case, that $\xi \in N_{\feps_k}(\partial G_s)$, and write $\xi = \xi_2$.  If $\xi_2 \in G_s$ lies in $N_{\feps_k}(\partial G_s)$ then $\xi_2$ is within $\feps_k$ of a point $\xi_1$ of $G_s$ not lying in $N_{\feps_k}(\partial G_s)$, so $d(\phi(\beta), \xi_2) \leq d(\xi_2, \xi_1) + d(\phi(\beta), \xi_1) < \feps_k + \feps_0 = \feps_{k+1}$.  Hence $G_s \subseteq N_{\feps_{k+1}}(\phi(F_{s\fdelta}))$, so the induction hypothesis is proved.

So by induction, $G_s \subseteq N_{\feps_{q-1}}(\phi(F_{s\fdelta}))$ for all $s \in S$ with $n_s \le q-1$.  But since $X$ is generic and $r>R$, all $s \in S$ have $n_s \le q-1$.  Hence $d_H(\phi(F_{s\fdelta}),G_s) < \feps_{q-1} = q\feps_0$ for all $s \in S$.

So given any $\feps > 0$, choose $\feps_0 = \feps/q$ in the above work to establish the theorem.
\end{proof}

Theorem \ref{T:duality} immediately has the following corollary, which says that the faces $G_s$ form the ideal boundary of  the image of $\phi$.

\begin{corol}
\label{C:boundaryphi}
The closure of $\phi(\R^q)$ is obtained by adding $\cup_{s\in S} G_s$ to $\phi(\R^q)$.
\end{corol}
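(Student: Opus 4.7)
The plan is to prove two inclusions, $\bigcup_{s \in S} G_s \subseteq \overline{\phi(\R^q)}$ and $\overline{\phi(\R^q)} \subseteq \phi(\R^q) \cup \bigcup_{s \in S} G_s$, where the closure is taken in $\overline{\Xi} \subseteq \R^n$. The first inclusion is essentially an immediate consequence of Theorem \ref{T:duality}: given $\xi \in G_s$ and $\feps > 0$, setting $\fdelta = \feps / q\sqrt{n}$ and taking $r$ large enough, the bound $d_H(\phi(F_{s\fdelta}), G_s) < \feps$ produces some $\beta \in F_{s\fdelta} \subseteq \R^q$ with $\| \phi(\beta) - \xi \| < \feps$, so $\xi \in \overline{\phi(\R^q)}$.

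For the reverse inclusion, let $\xi \in \overline{\phi(\R^q)} \setminus \phi(\R^q)$ and pick a sequence $\beta_n \in \R^q$ with $\phi(\beta_n) \to \xi$. First I would show that $\| \beta_n \| \to \infty$: if some subsequence were bounded, then after further extraction it would converge to some $\beta^\ast \in \R^q$, and continuity of $\phi$ would give $\xi = \phi(\beta^\ast) \in \phi(\R^q)$, contradicting the choice of $\xi$. Then, after passing to a further subsequence, I would assume the unit vectors $u_n \defeq \beta_n / \| \beta_n \|$ converge to some unit vector $u \in \R^q$ (using compactness of the unit sphere). Define $s \in \{-1, 0, 1\}^n$ by $s_i = \sign(x_i u)$. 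For each $i$ with $s_i \neq 0$, we have $x_i \beta_n = \| \beta_n \| \, x_i u_n \to s_i \infty$, so formula (\ref{E:phi}) gives $\phi_i(\beta_n) \to s_i \pi/2$; hence $\xi_i = s_i \pi/2$. For indices with $s_i = 0$, the limit $\xi_i$ is only constrained to lie in $[-\pi/2, \pi/2]$. This is precisely the defining condition for $\xi \in G_s$. Finally, $s \in S$ since $ru \in \Sr$ satisfies $\signmap(ru) = s$ for any $r > 0$.

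The main technical hurdle is the step $\| \beta_n \| \to \infty$, but this is fairly routine once we observe that $\phi$ is continuous (and injective by Lemma \ref{L:monotonic}), since a bounded sequence in $\R^q$ cannot have an accumulation point $\xi$ outside $\phi(\R^q)$. The genuine content of the argument is reading off the sign pattern $s$ from the asymptotic direction of $\beta_n$; this works only because each coordinate $\phi_i$ is a strictly monotonic bounded function of $x_i \beta$ that saturates at $\pm \pi/2$, so the coordinates in the ``determined'' directions ($s_i \neq 0$) lock onto $\pm \pi/2$ while the ``indeterminate'' directions ($s_i = 0$) remain free, producing exactly a point of $G_s$.
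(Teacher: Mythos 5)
Your proof is correct. The paper offers no written argument for this corollary --- it is presented as an immediate consequence of Theorem \ref{T:duality} --- and your forward inclusion $\bigcup_{s\in S} G_s \subseteq \overline{\phi(\R^q)}$ is exactly that intended consequence: $d_H(\phi(F_{s\fdelta}),G_s)<\feps$ gives, for each $\xi \in G_s$, a point of $\phi(\R^q)$ within $\feps$ of $\xi$. Where you genuinely depart from the paper's route is the reverse inclusion. The paper would again read it off Theorem \ref{T:duality}: the sets $F_{s\fdelta}$, $s\in S$, partition $\Sr$, so for $r$ large the whole image $\phi(\Sr)$ lies in $\bigcup_{s}N_\feps(G_s)$, and a limit of $\phi(\beta_m)$ with $\|\beta_m\|\to\infty$ is therefore within $\feps$ of the closed set $\bigcup_s G_s$ for every $\feps$. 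You instead argue directly: extract an asymptotic direction $u$ of the escaping sequence, set $s_i=\sign(x_i u)$, and use the saturation of $\arcsin(\tanh(\cdot/2))$ at $\pm\pi/2$ to conclude $\xi\in G_s$, with $s\in S$ because $\signmap(ru)=s$. This buys you something: that half of the argument is self-contained, needing only continuity of $\phi$, compactness of the unit sphere, and monotone saturation of the coordinate maps --- in particular it does not invoke genericity of $X$ or the (rather delicate) induction behind Theorem \ref{T:duality} --- whereas the paper's reading keeps everything as a one-line consequence of the duality theorem but inherits its hypotheses. Your bounded-subsequence step showing $\|\beta_m\|\to\infty$ is also exactly the right way to dispose of limit points coming from bounded parameter sequences.
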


We now have the following theorem, which says that the image of the face $F_{s\fdelta}$ under the reparameterisation map $f$ is approximately $H_s$.  Since $F_{s\fdelta}$ approximates $F_s$ for large $r$ (in relative terms), this shows that $f$ induces a duality between the polygonal decomposition of the ideal boundary of the natural parameter space and that of the expectation parameter space.

\begin{theorem}
\label{T:duality_exp}
For any $\feps > 0$, there exists $R > 0$ and $\fdelta > 0$ so that
$$d_H(f(F_{s\fdelta}),H_s) < \feps$$
for any $s \in S$ and any $r>R$ (for a generic design matrix $X$).
\end{theorem}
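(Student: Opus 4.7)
The plan is to reduce Theorem \ref{T:duality_exp} to Theorem \ref{T:duality} by exploiting the observation that the reparameterisation map $f$ factors as $f = (X^T h) \circ \phi$. Since $H_s = X^T h(G_s)$ by definition and $f(F_{s\fdelta}) = X^T h(\phi(F_{s\fdelta}))$, both sets in the statement are images under the common map $\psi \defeq X^T \circ h: \overline{\Xi} \to \R^q$ of the two sets $\phi(F_{s\fdelta})$ and $G_s$ which Theorem \ref{T:duality} already compares.

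First I would establish that $\psi$ is Lipschitz on the compact cube $\overline{\Xi}$. The map $h: \overline{\Xi} \to [0,1]^n$ is diagonal with $\partial h_i/\partial \xi_i = \frac{1}{2} \cos \xi_i$ and all off-diagonal partials equal to zero, so its Jacobian has operator norm bounded by $\frac{1}{2}$ on $\overline{\Xi}$. Composing with the linear map $\ex \mapsto X^T \ex$ then gives a Lipschitz constant $L \defeq \tfrac{1}{2} \| X^T \|$, where $\|X^T\|$ is the operator norm. (All that matters is the existence of some finite $L$, which follows from smoothness of $h$ on a compact domain together with linearity of $X^T$.)

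Second, I would use the elementary fact that if $\psi$ is $L$-Lipschitz on a set containing $A \cup B$ then $d_H(\psi(A), \psi(B)) \le L \, d_H(A, B)$, which is immediate from the definition of Hausdorff distance (each $\feps$-cover of one set maps to an $L\feps$-cover of its image). Applied to $A = \phi(F_{s\fdelta}) \subseteq \Xi$ and $B = G_s \subseteq \overline{\Xi}$ (both contained in $\overline{\Xi}$, where $\psi$ is $L$-Lipschitz) this gives
\[
d_H(f(F_{s\fdelta}),\, H_s) \;=\; d_H(\psi(\phi(F_{s\fdelta})),\, \psi(G_s)) \;\le\; L \cdot d_H(\phi(F_{s\fdelta}), G_s).
\]

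Finally, given $\feps > 0$, I would apply Theorem \ref{T:duality} with $\feps_0 \defeq \feps/L$ in place of $\feps$, obtaining some $R > 0$ such that $d_H(\phi(F_{s\fdelta}), G_s) < \feps_0$ for all $s \in S$ and all $r > R$, where $\fdelta = \feps_0/(q\sqrt{n})$. Combining with the displayed inequality yields $d_H(f(F_{s\fdelta}), H_s) < L\feps_0 = \feps$ for all $s \in S$ and $r > R$, proving the theorem. There is no real obstacle here: once the factorisation $f = \psi \circ \phi$ is noted, the result is essentially a corollary of Theorem \ref{T:duality}, and the only mild point of care is checking that the Lipschitz bound for $\psi$ is uniform over the compact cube $\overline{\Xi}$ so that it can be applied to $\phi(F_{s\fdelta})$ and to $G_s$ simultaneously.
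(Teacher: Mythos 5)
Your proposal is correct and follows essentially the same route as the paper, which also deduces Theorem \ref{T:duality_exp} by applying the map $\xi \mapsto X^T h(\xi)$ to Theorem \ref{T:duality} and using its continuity on the compact cube $\overline{\Xi}$. Your version merely makes this quantitative by exhibiting an explicit Lipschitz constant, which is a harmless refinement of the paper's uniform-continuity argument.
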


\begin{proof}
This follows by applying the function $\xi \mapsto X^T h(\xi)$ to Theorem \ref{T:duality} and by the fact that this function is continuous on $\overline{\Xi}$.
\end{proof}

Since the vertices of the ideal boundary of the expectation parameter space correspond one-to-one to data vectors $y \in \{0,1\}^n$ for which no maximum likelihood estimate exists, we have the following corollary of the duality just proved in Theorem \ref{T:duality_exp}.

\begin{corol}
The number of data vectors $y \in \{0,1\}^n$ for which no maximum likelihood estimate exists is equal to the number of connected components of
$$\{ \beta \in \R^q \mid \mbox{$x_i \beta \not= 0$ for all $i = 1, \ldots, n$ } \}$$
where we recall that the design matrix $X$ is generic and $x_i$ is its $i^{th}$ row.
\end{corol}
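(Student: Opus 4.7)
The plan is to set up a bijection between the two sets via the sign-vector formalism of Sections \ref{S:poly_nat} and \ref{S:poly_exp}. For each $y \in \{0,1\}^n$ write $s(y) := (2y_1 - 1, \ldots, 2y_n - 1) \in \{-1, +1\}^n$, and for each sign vector $s$ let $C_s := \{ \beta \in \R^q : \signmap(\beta) = s \}$. The map $y \mapsto C_{s(y)}$ will be the claimed bijection; both sides of the equality turn out to count the sign vectors in $\{ s \in S : n_s = 0 \}$.

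First I would translate the left-hand side. For logistic regression, no MLE exists for $y$ iff the data are linearly separable, i.e.\ iff some $\beta \in \R^q \setminus \{0\}$ satisfies $\sign(x_i \beta) = s(y)_i$ for every $i$. This is the standard characterisation, and is the fact already used in the paragraph preceding the corollary to identify the vertices of the expectation polytope via \cite[Corollary 9.6]{BarndorffNielsen78}. Rescaling $\beta$ to lie on $\Sr$, the condition becomes $s(y) \in S$ with $n_{s(y)} = 0$; conversely every such $s$ equals $s(y)$ for a unique $y \in \{0,1\}^n$. Hence $y \mapsto s(y)$ is a bijection between the no-MLE data vectors and $\{ s \in S : n_s = 0 \}$.

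Next I would translate the right-hand side. The set $W := \{ \beta \in \R^q : x_i \beta \neq 0 \text{ for all } i \}$ is the complement of the central hyperplane arrangement $\bigcup_i \{ x_i \beta = 0 \}$, and so excludes the origin. For each $s \in \{-1,+1\}^n$ the set $C_s$ is either empty or an open convex polyhedral cone, since it is cut out by the strict homogeneous inequalities $s_i (x_i \beta) > 0$. Thus every non-empty $C_s$ is open and convex, hence connected; the $C_s$ are pairwise disjoint; and their union is $W$. So they are exactly the connected components of $W$, and non-emptiness of $C_s$ is precisely the condition $s \in S$ with $n_s = 0$.

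Composing the two bijections yields the corollary. The only non-routine input is the linear-separability characterisation of no-MLE in the first step; this is standard and is already implicit in Section \ref{S:poly_exp}, so no additional work beyond quoting it is needed. In particular, the genericity of $X$ (any $q$ rows linearly independent) is used only through the discussion in Section \ref{S:poly_exp} that guarantees the vertex/sign-vector identification; everything else is a direct consequence of the geometry of a central hyperplane arrangement.
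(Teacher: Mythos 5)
Your right-hand-side analysis is fine: for a central arrangement the non-empty open cones $C_s$ with full sign vector $s$ are open, convex, pairwise disjoint and cover the complement, so the components are exactly the $s\in S$ with $n_s=0$. The genuine gap is in your left-hand-side step. The biconditional you quote as ``the standard characterisation'' --- no MLE for $y$ iff some $\beta\neq 0$ has $\sign(x_i\beta)=s(y)_i$ for \emph{every} $i$ --- is not the standard statement and is false for general design matrices. The standard characterisation (Albert--Anderson, or \cite[Corollary 9.6]{BarndorffNielsen78} as used in Section \ref{S:poly_exp}) is that the MLE fails to exist iff $X^Ty$ lies on the boundary of the convex hull of the sufficient statistics, equivalently iff there is some $\beta\neq 0$ with $s(y)_i\,x_i\beta\ge 0$ for all $i$; quasi-complete separation, where some $x_i\beta=0$, also destroys the MLE. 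For example, with rows $(1,-1),(1,0),(1,0),(1,1)$ and $y=(0,0,1,1)$ there is no MLE but no $\beta$ matches all the strict signs, so your bijection between no-MLE vectors and $\{s\in S: n_s=0\}$ would miscount. Moreover, Section \ref{S:poly_exp} only proves the direction you do not need (strict separation implies no MLE); the converse is nowhere established in the paper's preceding paragraph, so it cannot simply be quoted.

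The converse is true here precisely because $X$ is generic, which is the opposite of your closing claim that genericity enters only through the vertex identification. If $\beta\neq 0$ weakly separates, genericity forces the active set $Z=\{i: x_i\beta=0\}$ to have at most $q-1$ elements, and since $\{x_i: i\in Z\}$ are then linearly independent there is (by Gordan's theorem, or directly) a direction $v$ with $s(y)_i\,x_i v>0$ for all $i\in Z$, so $\beta+\epsilon v$ separates strictly for small $\epsilon>0$; this upgrades weak to strict separation and also shows injectivity of $y\mapsto s(y)$ on the no-MLE vectors. With that lemma inserted, your argument is complete, and it is then a genuinely more elementary route than the paper's, which obtains the corollary from the duality of Theorem \ref{T:duality_exp} together with the identification of the vertices $H_s$, $n_s=0$, in Section \ref{S:poly_exp}; your version trades that machinery for having to prove the separation equivalence yourself, which is exactly the step currently missing.
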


Lastly, this duality (in the form of Theorem \ref{T:duality}), also implies that the contribution to $\vol(\SX)$
is concentrated in constant-width neighbourhoods of certain lines (the lines at the intersection of $q-1$ of the hyperplanes $\{ \beta \in \R^q \mid x_i \beta = 0  \}$).  This fact might be useful when trying to numerically evaluate $\vol(\SX)$ via the integral (\ref{E:vol_defn}).

\section{Volume jumps at non-generic $X$}
\label{S:nongeneric}

In this section we show that the volume $\vol(\SX)$ is discontinuous at every non-generic $X$ which, together with Theorem \ref{T:volcont}, shows show that $\vol(\SX)$ is continuous at $X$ if and only if $X$ is generic.  We also show that the volume jump $\vol(\SZ) - \vol(\SX)$ between the volumes of a non-generic matrix $X$ and a nearby generic matrix $Z$ is $\pi^q$ or larger, and that size of the volume jump reflects the degree of degeneracy of $X$.

Let $X$ be a full-rank, real $n \times q$ matrix.  Define the {\em degree of degeneracy} $N_0$ of $X$ to be the number of subsets $I \subseteq \{ 1, \ldots, n\}$ with exactly $q$ elements for which $\det X_I = 0$, where $X_I$ is the matrix obtained from $X$ by deleting the rows with row numbers not in $I$.  
Note that $X$ is generic if and only if $N_0 = 0$.

Define the {\em minimum volume jump} at $X$ to be
$$ \dmin \defeq \lim_{\epsilon \to 0^+} \inf_{Z \in \mathcal{U}_\epsilon} (\vol(\SZ) - \vol(\SX)) $$
where $\mathcal{U}_\epsilon = \{ Z \in \R^{n \times q} \mid \mbox{ $Z$ is generic and } \| X - Z \|_F < \epsilon \}$ is the set of all generic matrices within a distance $\epsilon > 0$ of $X$ in the space $\R^{n \times q}$ of real $n \times q$ matrices endowed with the Frobenius norm $\| Z \|_F = \sqrt{\tr Z^T Z}$ ($\| Z \|_F^2$ is just the sum of the squares of all the components of $Z$, so this is the Euclidean norm on $\R^{n \times q}$).  Note that the set of generic matrices is an open and dense subset of $\R^{n \times q}$, so $\mathcal{U}_\epsilon$ is non-empty for all $X \in \R^{n \times q}$ and $\epsilon > 0$.  Similarly, define the {\em maximum volume jump} at $X$ to be
$$ \dmax \defeq \lim_{\epsilon \to 0^+} \sup_{Z \in \mathcal{U}_\epsilon} (\vol(\SZ) - \vol(\SX)). $$
Note that if the volume is continuous at $X$ then $\dmin = \dmax = 0$ (and the converse is true in light of the following theorem).

\begin{theorem}
\label{T:voljump_simple}
If $X$ is non-generic then
$$\dmin \ge  \pi^q.$$
Together with Theorem \ref{T:volcont}, this implies that the volume $\vol(\SX)$ is continuous at $X$ if and only if $X$ is generic, and the volume jump at non-generic $X$ is always at least $\pi^q$.  Further,
$$\dmin \ge \frac{N_0 \pi^q}{\sqrt{{n \choose q}}}$$
where $N_0$ is the degree of degeneracy of $X$.
\end{theorem}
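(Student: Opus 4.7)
My strategy is to bound $\vol(\SZ) - \vol(\SX)$ from below for generic $Z$ close to $X$ by splitting the defining integral of $\vol(\SZ)$ over $\R^q = B_R \cup U_R$, where $B_R$ is a large closed ball of radius $R$ and $U_R = \R^q \setminus B_R$ its complement. On $B_R$ I exploit the joint continuity of the integrand in $(Z,\beta)$ on a compact set; on $U_R$ I extract the ``extra'' volume using the fact that for generic $Z$ every $\phi_{Z,I}:\R^q \to (-\pi/2,\pi/2)^q$ is a bijection (Lemma \ref{L:monotonic} and the surjectivity argument in the proof of Theorem \ref{T:volSXbound}), while for non-generic $X$ the projections $\phi_{X,I}$ with $\det X_I = 0$ have image of $q$-dimensional Hausdorff measure zero. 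Both claimed inequalities will fall out of the same decomposition.

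Fix $\delta > 0$. I would first choose $R$ so large that $\vol(\SX\,|\,B_R) > \vol(\SX) - \delta$ and, for every $I$ with $\det X_I \ne 0$, $\vol_q(\phi_{X,I}(B_R)) > \pi^q - \delta$ (using that such $\phi_{X,I}$ is a bijection onto the whole cube $(-\pi/2,\pi/2)^q$). By joint continuity of the integrands on the compact set $B_R$, for all $Z$ in a sufficiently small neighborhood of $X$ we then have $\vol(\SZ\,|\,B_R) > \vol(\SX) - 2\delta$, and for each subset $I$ of size $q$, $\vol_q(\phi_{Z,I}(B_R))$ approximates $\vol_q(\phi_{X,I}(B_R))$ to within $\delta$. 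In particular $\vol_q(\phi_{Z,I}(B_R)) < \delta$ whenever $\det X_I = 0$, the limiting value being $0$ because $\phi_{X,I}$ factors through a lower-dimensional image by Lemma \ref{L:monotonic}.

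For $\vol(\SZ\,|\,U_R)$ I derive two lower bounds, both using that for generic $Z$, $\phi_{Z,I}$ is bijective and hence $\vol_q(\phi_{Z,I}(U_R)) = \pi^q - \vol_q(\phi_{Z,I}(B_R))$. For the bound $\dmin \ge \pi^q$, pick a single $I_0$ with $\det X_{I_0} = 0$ (which exists because $X$ is non-generic); extending Lemma \ref{L:volembed} to the subdomain $U_R$ gives $\vol(\SZ\,|\,U_R) = \vol_q(\phi_Z(U_R))$, and because the coordinate projection $\rho_{I_0}$ is $1$-Lipschitz and therefore does not increase $q$-dimensional Hausdorff measure, $\vol(\SZ\,|\,U_R) \ge \vol_q(\rho_{I_0}(\phi_Z(U_R))) = \vol_q(\phi_{Z,I_0}(U_R)) > \pi^q - \delta$. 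For the bound $\dmin \ge N_0 \pi^q/\sqrt{\binom{n}{q}}$, apply the de Gua inequality (\ref{E:deGua_ineq2}) pointwise to $V = J_Z(\beta)$ and integrate over $U_R$ to obtain $\vol(\SZ\,|\,U_R) \ge \binom{n}{q}^{-1/2}\sum_I \vol_q(\phi_{Z,I}(U_R))$; discarding the $N_1$ terms with $\det X_I \ne 0$ and using $\vol_q(\phi_{Z,I}(U_R)) > \pi^q - \delta$ on each of the $N_0$ remaining terms gives $\vol(\SZ\,|\,U_R) \ge N_0(\pi^q - \delta)/\sqrt{\binom{n}{q}}$.

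Adding the $B_R$ and $U_R$ contributions gives $\vol(\SZ) - \vol(\SX) \ge \max\bigl(\pi^q,\; N_0 \pi^q/\sqrt{\binom{n}{q}}\bigr) - O(\delta)$ for all $Z$ in a suitable neighborhood of $X$; sending $\delta \to 0$ establishes both inequalities of the theorem, and the ``if and only if'' continuity statement is then immediate since $\dmin \ge \pi^q > 0$ forces $\vol$ to jump at every non-generic $X$, while Theorem \ref{T:volcont} gives continuity at every generic $X$. The main technical step is the continuity $\vol_q(\phi_{Z,I}(B_R)) \to \vol_q(\phi_{X,I}(B_R))$ as $Z \to X$, and in particular that this limit is $0$ whenever $\det X_I = 0$; this follows from the explicit area-formula expression $\vol_q(\phi_{Z,I}(B_R)) = |\det Z_I|\int_{B_R}\prod_{i \in I}(2\cosh(z_i\beta/2))^{-1}\,d\beta$ (valid whenever $\phi_{Z,I}$ is injective, hence for $\det Z_I \ne 0$) combined with uniform continuity of the product on the compact $B_R$ and $\det Z_I \to 0$.
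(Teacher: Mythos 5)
Your proposal is correct and follows essentially the same route as the paper: the same decomposition of $\R^q$ into a large compact ball $B_R$ and its complement, continuity of the integrand on the compact $B_R$ to get $\vol(\SZ\,|\,B_R)\approx\vol(\SX)$, and the de Gua inequalities to lower-bound the tail contribution by (approximately) $\pi^q$ per degenerate index set, each such $\SZI$ being a full saturated model of volume $\pi^q$ whose $B_R$-portion vanishes as $Z\to X$. The only cosmetic differences are that you justify the single-index bound via a $1$-Lipschitz-projection/Hausdorff-measure argument instead of invoking inequality (\ref{E:deGua_ineq}), you verify $\vol_q(\phi_{Z,I}(B_R))\to 0$ through the explicit factorisation $|\det Z_I|\int_{B_R}\prod_{i\in I}\bigl(2\cosh(z_i\beta/2)\bigr)^{-1}d\beta$ rather than general continuity of integrals over compact domains, and in the second bound you simply discard the nonnegative terms with $\det X_I\neq 0$ where the paper estimates them as approximately zero --- all harmless variations.
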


\begin{proof}
Given any $\delta > 0$, choose $R > 0$ large enough that $|\vol(\SX) - \vol(\SX | B_R)| < \delta$ where $B_R$ is the ball of radius $R$ centred at the origin in the natural parameter space $\R^q$ (such an $R$ exists by the dominated convergence theorem \cite{Elstrodt96}).  Given any $\epsilon > 0$, let $Z$ be a generic matrix within a given distance $\epsilon > 0$ of $X$.  We want to compare $\vol(\SZ)$ to $\vol(\SX)$, so we start by writing $\vol(\SZ)$ as the sum of two terms:
\beq \label{E:vol_twoparts}
\vol(\SZ) = \vol(\SZ | B_R) + \vol(\SZ | \R^q \setminus B_R).
\eeq

We first claim that the first term on the right-hand side of (\ref{E:vol_twoparts}) is approximately $\vol(\SX)$.  To see this, note that since $B_R$ is compact and the integrand of (\ref{E:vol_defn}) is continuous, $\vol(\SX | B_R)$ is a continuous function of $X$.  So if we let `$\approx$' denote an approximate equality which can be made arbitrarily good by taking $\delta$ and $\epsilon$ small enough, then
\beq \label{E:vol_compactpart}
\vol(\SZ | B_R) \approx \vol(\SX | B_R) \approx \vol(\SX).
\eeq

We next claim that the second term on the right-hand side of (\ref{E:vol_twoparts}) can be approximately bounded below by $\pi^q$.  To see this, we note first that because $X$ is non-generic, there is some subset $I \subseteq \{ 1, \ldots, n \}$ with exactly $q$ elements so that $\det X_I = 0$.  As argued above, $\vol(\SXI |B_R)$ is a continuous function of $X_I$ since $B_R$ is compact, so
\beq
\label{E:vol_SZI}
\vol(\SZI |B_R) \approx \vol(\SXI |B_R) = 0
\eeq
where $\SZI$ is the logistic regression model with $q \times q$ design matrix $Z_I$ and the last step follows because $\det X_I = 0$ so $\vol(\SXI) = 0$.
So letting $J(\beta)$ and $J_I(\beta)$ denote the Jacobian matrices of $\phi_Z$ and $\phi_{Z_I}$ (respectively), we have
\begin{eqnarray}
\vol(\SZ |\R^q \setminus B_R) &=&  \int_{\R^q \setminus B_R} \sqrt{\det(X^T D_{X \beta} \, X)} \, d\beta \mbox{ by definition} \nonumber\\
&=&  \int_{\R^q \setminus B_R} \sqrt{\det(J(\beta)^T J(\beta))} \, d\beta \mbox{ by (\ref{E:Jbeta})} \nonumber\\
&\ge&  \int_{\R^q \setminus B_R} \sqrt{ \det(J_I(\beta)^T J_I(\beta))} \, d\beta \mbox{ by (\ref{E:deGua_ineq}) with $V = J(\beta)$} \nonumber\\
&=& \vol(\SZI) - \vol(\SZI |B_R)  \nonumber \\
&\approx&  \vol(\SZI) \mbox{ by (\ref{E:vol_SZI})} \nonumber \\
&=& \pi^q \mbox{ by Theorem \ref{T:volSXbound}} \label{E:vol_noncompactpart}
\end{eqnarray}
So combining (\ref{E:vol_twoparts}), (\ref{E:vol_compactpart}) and (\ref{E:vol_noncompactpart}) gives $\dmin \ge  \pi^q$.  Therefore $\vol(\SX)$ is discontinuous at non-generic $X$ and the volume jump there is always $\pi^q$ or larger.

To prove the other bound on $\dmin$, let $\mathcal{I}$ be the set of all subsets $I \subseteq \{1, \ldots, n\}$ with exactly $q$ elements for which $\det X_I = 0$.  So $\mathcal{I}$ is non-empty, since $X$ is non-generic, and $\mathcal{I}$ has $N_0$ elements $I$, by the definition of the degree of degeneracy.  Letting $I \subseteq \{1, \ldots, n\}$ be a subset with exactly $q$ elements, if $I \in \mathcal{I}$ then (\ref{E:vol_SZI}) holds (by the same reasoning as above), and if $I \not\in \mathcal{I}$ then
\beq
\label{E:vol_SZI2}
\vol(\SZI |B_R) \approx \vol(\SXI |B_R) \approx \vol(\SXI) = \pi^q = \vol(\SZI)
\eeq
where the last two equalities follow by Theorem \ref{T:volSXbound}
and the second approximate equality holds after perhaps taking a larger $R$.
Then in place of (\ref{E:vol_noncompactpart}) we have the following:
\begin{eqnarray}
\vol(\SZ |\R^q \setminus B_R) &=& \int_{\R^q \setminus B_R} \sqrt{\det(J(\beta)^T J(\beta))} \, d\beta \mbox{ by (\ref{E:Jbeta})} \nonumber\\
&\ge&  {n \choose q}^{-\frac{1}{2}} \sum_I  \int_{\R^q \setminus B_R} \sqrt{ \det(J_I(\beta)^T J_I(\beta))} \, d\beta \mbox{ by (\ref{E:deGua_ineq2}) with $V = J(\beta)$} \nonumber\\
&=& {n \choose q}^{-\frac{1}{2}} \sum_I  \left[\vol(\SZI) - \vol(\SZI |B_R) \right] \nonumber \\
&\approx&  {n \choose q}^{-\frac{1}{2}} \sum_{I \in \mathcal{I}}  \vol(\SZI) \mbox{ by (\ref{E:vol_SZI}) and (\ref{E:vol_SZI2})} \nonumber \\
&=& {n \choose q}^{-\frac{1}{2}} N_0 \pi^q \mbox{ by Theorem \ref{T:volSXbound}} \label{E:vol_noncompactpart2}
\end{eqnarray}
Combining (\ref{E:vol_twoparts}), (\ref{E:vol_compactpart}) and (\ref{E:vol_noncompactpart2}) gives the second bound on $\dmin$ in the statement.
\end{proof}

\section{Conclusions}
\label{S:conc}

This paper studied logistic regression models and their volumes.  Our main result bounds the volume of a logistic regression model and, in particular, implies the novel result that the volume is always finite.  This implies that logistic regression models have proper Jeffreys priors, so the volume can be interpreted as a measure of model complexity in the simplest and most elegant version of the MDL approach.  We gave an approximation to the volume and derived a corresponding model-selection criterion, and as a proof of principle we applied this criterion to an image processing problem.  We also showed that the volume is a continuous function of the design matrix $X$ at generic $X$ but is discontinuous in general.  Our model-selection criterion therefore favours models with sparse design matrices, analogous to the way that $\ell^1$-regularisation favours sparse parameter estimates, though in our case this behaviour arises spontaneously from general principles.

We also proved that the ideal boundaries of the natural and expectation parameter spaces of logistic regression models have natural polygonal decompositions which are topologically dual under the reparameterisation map (see Figure \ref{F:duality}).  The full causes and implications of this extremely unusual behaviour are not clear, however this behaviour does not appear to be a consequence of known dualities for exponential families (e.g., convex conjugation \cite[Ch. 9]{BarndorffNielsen78}), so it might hint at a deeper duality.

Lastly, we proved a generalisation of the classical theorems of Pythagoras and de Gua, which is of independent interest.

\section*{Acknowledgements}

The author would like to thank Enes Makalic and Daniel F. Schmidt for introducing him to the volume as a measure of model complexity and for interesting subsequent discussions.

\bibliographystyle{plain}
\bibliography{S:/Staff/EnesAndDaniel/Bibliography/bibliography}

\end{document}